\def\dessin#1#2{\includegraphics[#1]{#2}}
\def\r{\mathfrak{r}}
\newtheorem{theo}{Theorem}
\newtheorem{defn}[theo]{Definition}
\newtheorem{exa}[theo]{Example}
\newtheorem{lem} [theo]{Lemma}
\newtheorem{cor}[theo]{Corollary}
\newtheorem{prop}[theo]{Proposition}
\newtheorem{rem}[theo]{Remark}
\newtheorem{conj}[theo]{Conjecture}
\newcommand{\area}{\operatorname{\texttt{area}}}
\newcommand{\dinv}{\operatorname{\texttt{dinv}}}
\newcommand{\bounce}{\operatorname{\texttt{bounce}}}
\def\sort{\operatorname{\mbox{\texttt{sort}}}}
\title[A search algorithm for inverting $\Phi$]{An efficient search algorithm for inverting the sweep map on rational Dyck paths}
\author{Guoce Xin}
\address{School of Mathematical Sciences, Capital Normal University, Beijing 100048, PR China}
\email{guoce.xin@gmail.com}
\date{May 4, 2015} 
\begin{document}

\begin{abstract}
Given a coprime pair $(m,n)$ of positive integers, 
rational $(m,n)$-Dyck paths are lattice paths in the $m\times n$ rectangle that never go below the diagonal.
The sweep map of a rational $(m,n)$-Dyck paths $D$ is the rational Dyck path $\Phi(D)$ obtained by sorting the steps of $D$ according to the ranks of their starting points, where
the rank of $(a,b)$ is $bm-an$.
It is conjectured to be a bijection, but to this date, $\Phi$ is only known to be bijective for the Fuss case ($m=kn\pm 1$). 
In this paper we give an efficient search algorithm for inverting the $\Phi$ map. Roughly speaking, given $\sigma\in \cal D_{m,n}$, by searching through a $d$-array tree of certain depth, we can output all $D$ such that $\Phi(D)=\sigma$, where
$d$ is the remainder of $m$ when divided by $n$. In particular, we show that $\Phi$ is invertible for the Fuss case  by giving a simple recursive construction for $\Phi^{-1} (\sigma)$.
\end{abstract}

\maketitle
{\small Mathematics Subject Classifications: 05A19, 05E05}

{\small \textbf{Keywords}: rational Dyck paths; sweep map; partitions.}

\section{Introduction}
Throughout this paper, $(m,n)$ is always a coprime pair of positive integers, unless specified otherwise. Rational $(m,n)$-Dyck paths are lattice paths in the $m\times n$ rectangle that never go below the diagonal line $y=\frac{n}{m}x$. Denote by $\cal D_{m,n}$ the set of such paths. Many works have been done on rational Dyck paths, we are particularly interested with the
sweep map $\Phi: \ \cal D_{m,n} \mapsto \cal D_{m,n}$ that we are going to define.

An $(m,n)$-Dyck path $D$ is encoded as a $(u,d)$ (stand for going ``north" and ``east" respectively) sequence $D=p_1p_2\cdots p_{m+n}$ of $n$ $u$'s and $m$ $d$'s. Each $p_i$ is associated with a rank $r_i$ that is recursively defined by $r_1=0$, and for $2\le i\le m+n$, $r_{i}=r_{i-1}+m$ if $p_{i-1}=u$ and $r_{i}=r_{i-1}-n$ if $p_{i-1}=d$. Then the sweep map $\Phi(D)$ is obtained by sorting the $p_i$'s according to their ranks increasingly. It is true but not obvious that $\Phi(D)$ is still in $\cal D_{m,n}$. See \cite{sweepmap} for a proof.
\begin{exa}\label{exa-D-rD}
Let $(m,n)=(11,5)$.
A Dyck path $D$ and the associated rank are displayed below as a two line array.
$$  \left[\begin{array}{c} D \\ r(D) \end{array}  \right]= \left[ \begin {array}{cccccccccccccccc}
u& d& u &d &d &u &d & d& d& u& d&u  & d& d& d&d
\\0& 11& 6 &17 &12 &7 &18 & 13& 8& 3& 14&9 & 20& 15& 10&5
\end {array}
 \right].
$$
Sorting the above two line array according to the second row gives
$$  \left[\begin{array}{c} \Phi(D) \\ \sort(r(D)) \end{array} \right]= \left[ \begin {array}{cccccccccccccccc} u&u&d&u&u&d
&u&d&d&d&d&d&d&d& d & d
\\0& 3& 5 &6 &7 &8 &9 & 10& 11& 12& 13&14 & 15& 17& 18&20
\end {array}
 \right].
$$
Then the top row is $\Phi(D)$.
\end{exa}
Sweep map has become an active subject in the recent 15 years. Variations and extensions have been found, and some classical bijections turn out to be disguised sweep map. 
It first appears as the $\zeta$ map on standard Dyck paths in Haiman's study of diagonal harmonics and $q,t$-Catalan numbers \cite{Haiman}. Haglund \cite{Haglund-qtCatalan conjecture} discovered the $(\area,\bounce)$ statistic for $q,t$-Catalan numbers, and Haiman come up with the $(\dinv,\area)$ statistic. Only $\area(D)$ has a simple description: it is the number of lattice squares to the right of $D$ and to the left of the diagonal.
Sweep map provides a unified framework for the complex combinatorial structure related to $q,t$-Catalan numbers, because it takes the statistic $\dinv$ to $\area$ to $\bounce$.
However a simple description of $\bounce$ is only known in the Fuss case $m=kn\pm 1$ by Loehr \cite{Loehr-higher-qtCatalan}.
See \cite{sweepmap} for a detailed history. 

One major open problem is the invertibility of the sweep map. See \cite[Conjecture 3,3]{sweepmap} for an extended version. 
\begin{conj}
  The sweep map $\Phi$ is a bijective transformation on $\cal D_{m,n}$.
\end{conj}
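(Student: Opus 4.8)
The plan is to deduce bijectivity from the construction of an inverse. Since ${\cal D}_{m,n}$ is finite and $\Phi$ is a self-map of it, any one of injectivity, surjectivity, or the existence of a well-defined inverse procedure already yields bijectivity; so it suffices to show that every $\sigma\in{\cal D}_{m,n}$ has exactly one preimage, which I would do by \emph{reconstructing} $D$ from $\sigma$. The key structural fact I would exploit is the rigidity of the ranks along $D$ itself: if $p_i$ is a north-step then $r_{i+1}=r_i+m$, while if $p_i$ is an east-step then $r_{i+1}=r_i-n$. Consequently, once the \emph{set} $S=\{r_1,\dots,r_{m+n}\}$ of ranks of $D$ is known, the path is completely forced. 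Indeed, sorting $S$ as $s_1<\cdots<s_{m+n}$ and reading off the letters of $\sigma$ assigns the $j$-th letter of $\sigma$ to the rank $s_j$; then, starting from rank $0$ (the unique minimal rank, since every lattice point on a Dyck path has nonnegative rank, and it is carried by the first step $p_1=u$), one follows the deterministic rule ``at a rank carrying a north-step jump by $+m$, at a rank carrying an east-step jump by $-n$'' until the walk returns to $0$. Requiring that it visit each element of $S$ exactly once, the letters read in walk order spell out $D$, and by construction $\Phi(D)=\sigma$. Thus the whole problem collapses to a single question: which set $S$ of $m+n$ nonnegative integers is the true rank set of the preimage?

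To pin down $S$ I would use the arithmetic of the ranks modulo $n$. Writing the start of step $i$ as $(a_i,b_i)$ gives $r_i=b_im-a_in\equiv b_i\,d\pmod n$, where $d=m\bmod n$ is the remainder in the statement. Because $\gcd(m,n)=1$ forces $\gcd(d,n)=1$, the residue of a rank modulo $n$ determines $b_i\bmod n$, hence determines $b_i$ outright except for the single ambiguity $b_i\in\{0,n\}$ in the residue class $0$. This residue bookkeeping, together with the requirement that the deterministic walk close up into a single cycle covering $S$ (automatically keeping all ranks nonnegative once the candidate integers are taken nonnegative, which is exactly the Dyck condition), sharply restricts the admissible $S$. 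I would then organize the surviving choices into a rooted tree, building $S$ one residue layer at a time so that each layer admits at most $d$ valid continuations; the candidate preimages are precisely the leaves of the $d$-ary tree of bounded depth announced in the abstract, and searching it outputs every $D$ with $\Phi(D)=\sigma$.

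In the Fuss case $m=kn\pm1$ this turns from a search into a theorem. Here $d=m\bmod n\in\{1,n-1\}$, and for $d=1$ the branching collapses: each residue layer admits a \emph{unique} admissible continuation, so the tree degenerates to a single path and the rank set $S$, hence $\Phi^{-1}(\sigma)$, is produced by a deterministic recursion with no choices to make. The case $d=n-1\equiv-1\pmod n$ is symmetric, reconstructing $S$ from its largest element downward. Since the procedure then returns exactly one candidate, and that candidate is manifestly a valid $(m,n)$-Dyck path mapping to $\sigma$, the map $\Phi$ is shown at once to be injective and surjective on ${\cal D}_{m,n}$, establishing the conjecture in the Fuss case.

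The main obstacle is exactly the passage from ``bounded search'' to ``bijection'' in the general case. When $d>1$ the $d$-ary tree genuinely branches, and a priori several leaves could yield valid Dyck paths, or none might. A full proof of the conjecture amounts to showing that for every $\sigma$ precisely one leaf survives, i.e.\ that the admissibility constraints (nonnegativity of the ranks and single-cycle closure of the walk) are met in exactly one way. I would attempt this uniqueness by an extremal or monotonicity argument, tracking how each branching choice shifts the maximal rank or an area-type quantity and arguing that at most one choice keeps the walk weakly above the diagonal, or else by a sign-reversing involution pairing off the spurious leaves. Making such an argument succeed uniformly in $(m,n)$ is precisely where the method stalls, which is why the statement stands here as a conjecture rather than a theorem.
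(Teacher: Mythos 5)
You have not proved the statement, and neither does the paper: it is stated there as a conjecture, and the paper's own contribution is exactly the partial program you sketch --- reduce the problem to recovering the rank set (Proposition \ref{p-Rank-Set} and Theorem \ref{t-rank-set} show that a nonnegative rank set compatible with $\sigma$ determines the preimage via the deterministic $+m/-n$ walk), search for that rank set through a $d$-ary tree of bounded branching (the ReciPhi algorithm), and observe that in the Fuss case $m=kn\pm1$ the branching collapses to a deterministic recursion (FussiPhi, recovering Theorem \ref{t-fuss}). Your final paragraph correctly identifies the obstruction: for $d>1$ nothing guarantees that exactly one leaf of the search tree survives, and that is precisely why the general statement remains open. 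So as a verdict on the conjecture itself your proposal is honest but, like the paper, inconclusive.

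Where your sketch diverges from (and is weaker than) the paper's actual machinery is in how the tree is organized and why the branching is at most $d$. You propose to build the rank set ``one residue layer at a time,'' justified only by the observation that $r\equiv b\,d\pmod n$ determines the height $b$; this does not by itself bound the number of continuations, and you never say what a ``layer'' or an ``admissible continuation'' is. The paper instead recurses via the left operation $\mathcal{L}$ on rank sets (Lemma \ref{l-rec-left}), so the tree has depth $\area(\rc(D))$, and the branching bound $d$ comes from Lemma \ref{l-position-kn}: the position $a$ of the rank $(k+1)n$ is forced, and $m+n=(k+1)n+d$ must sit in one of the $d$ positions $a,\dots,a+d-1$, which is what makes each node have at most $d$ children and, when $d=1$ or $d=n-1$ (Theorem \ref{t-kn-pm}), exactly one. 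Your claim that nonnegativity is ``automatically'' preserved is also doing hidden work; it is Proposition \ref{p-Rank-Set} that guarantees any nonnegative set closed under the walk rule actually comes from a Dyck path. If you want to turn your sketch into the paper's actual results, you need those two lemmas; if you want to prove the conjecture itself, you need the uniqueness argument that neither you nor the paper supplies.
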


The conjecture has only been proved for the Fuss cases in \cite{Loehr-higher-qtCatalan} and \cite{Gorsky-Mazin2}.
\begin{theo}\label{t-fuss}
The sweep map $\Phi$ is a bijection in the Fuss case $m=kn\pm 1$.
\end{theo}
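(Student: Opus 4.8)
The plan is to reformulate the sweep map entirely in terms of the ranks and then reconstruct the preimage by a forced recursion. First I would record two elementary facts about the ranks $r_1,\dots,r_{m+n}$ attached to $D$: coprimality of $(m,n)$ forces them to be pairwise distinct (the only coincidence $bm-an=b'm-a'n$ inside the box is between $(0,0)$ and $(m,n)$, and the latter starts no step), and the Dyck condition $b\ge \frac nm a$ forces every rank $bm-an\ge 0$, with minimum $r_1=0$. Next I would introduce the \emph{successor map} $\mathrm{s}$ on the value set $R=\{r_1,\dots,r_{m+n}\}$ by $\mathrm{s}(r)=r+m$ if the step at rank $r$ is a $u$ and $\mathrm{s}(r)=r-n$ if it is a $d$. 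The recursion defining the ranks says that $\mathrm{s}$ sends the rank of each step to the rank of the next vertex, and tracing the final step to the endpoint $(m,n)$ (whose rank is again $0$) shows $\mathrm{s}$ is a single $(m+n)$-cycle $0\to r_2\to\cdots\to r_{m+n}\to 0$. The key consequence is that $D$ is recovered from the pair (value set $R$, labels) simply by starting at $0$ and following $\mathrm{s}$. Since $\Phi(D)=\sigma$ already presents the labels in increasing order of rank, inverting $\Phi$ is equivalent to recovering the actual values: find integers $0=s_1<s_2<\cdots<s_{m+n}$ for which the map sending $s_j\mapsto s_j+m$ when $\sigma_j=u$ and $s_j\mapsto s_j-n$ when $\sigma_j=d$ is a bijection of $\{s_1,\dots,s_{m+n}\}$.

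The Fuss hypothesis enters here. When $m=kn+1$ one has $m\equiv 1$ and $-n\equiv 0 \pmod n$, so the rank of a vertex is congruent to its height $b$ modulo $n$. Because $D$ is monotone, the columns occupied at a fixed height form one contiguous run, so through the rank formula the elements of $R$ lying in each residue class $c\in\{0,\dots,n-1\}$ modulo $n$ form a contiguous arithmetic progression of common difference $n$ whose smallest element is the unique $u$-labeled rank of that class (the up-step leaving that height) and whose other elements are $d$-labeled. Thus $\sigma$ contains exactly $n$ letters $u$, each opening one residue block, and the $d$'s fill the blocks. I would then build $R$ by scanning $\sigma$ in increasing rank order and assigning the only values consistent with this block structure and with strict increase. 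The claim to establish is that the Fuss congruence forces a \emph{unique} admissible assignment at each stage — equivalently, that the search tree of the general algorithm has branching factor $d=m\bmod n=1$ — so the reconstruction is a deterministic recursion outputting a single candidate $\Phi^{-1}(\sigma)$, whose Euclidean reduction terminates at the trivial base $\mathcal{D}_{1,n}$.

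With the candidate $R$ in hand I would close the argument in three short steps. First, check that following $\mathrm{s}$ from $0$ on the reconstructed $R$ returns to $0$ after $m+n$ steps and never yields a negative rank, so the output is a genuine path in $\mathcal{D}_{m,n}$. Second, check that the reconstructed ranks sort back to $\sigma$, i.e. $\Phi(\text{output})=\sigma$. Third, check that running the recursion on $\Phi(D)$ returns $D$, which gives a two-sided inverse. For $m=kn-1$ one has $m\equiv -1\pmod n$, and the same residue-and-block analysis applies verbatim; the only change is that the naive branching factor is now $m\bmod n=n-1$, so here I would either supply the extra Dyck constraints that still collapse the choices to one, or deduce this case from the $+1$ case through a rank-negating symmetry of $\Phi$.

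I expect the main obstacle to be the uniqueness assertion of the second paragraph: proving that the congruence and block constraints admit exactly one increasing assignment of values to the letters of $\sigma$, i.e. that the branching factor truly collapses to $1$ in the Fuss case. The cyclic-successor bookkeeping is routine, but it is precisely this determinacy that upgrades ``reconstructible'' to ``injective'', whence bijectivity follows because the domain and codomain $\mathcal{D}_{m,n}$ are finite and equal.
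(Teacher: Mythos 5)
Your first paragraph is correct but is essentially a restatement of the paper's Proposition \ref{p-Rank-Set} and Theorem \ref{t-rank-set}: a preimage of $\sigma$ is the same thing as an increasing nonnegative sequence $0=\r_1<\cdots<\r_{m+n}$ whose value set contains $\r_j+m$ whenever $\sigma_j=u$ and $\r_j-n$ whenever $\sigma_j=d$. The residue observation for $m=kn+1$ (ranks reduce to heights mod $n$, each class is a contiguous progression of step $n$ opened by its unique $u$-labelled minimum) is also true. The genuine gap is that everything after that is deferred: you write that ``the claim to establish is that the Fuss congruence forces a unique admissible assignment at each stage,'' and you close by naming this determinacy as the main obstacle. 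But that determinacy \emph{is} the theorem. Knowing $\sigma$ only gives the relative order of the $m+n$ unknown values $\mu_c+jn$, and extracting the actual offsets $\mu_c$ from these order constraints is precisely the hard step; the block structure alone does not visibly collapse the choices, and no argument is offered that it does. The paper does not argue this way at all: it proves determinacy by a recursion on $\area(\mathfrak{c}(D))$ via the left operation $\cal L$ (Lemma \ref{l-rec-left} and Corollary \ref{c-Phi-invert}), where the Fuss hypothesis enters only through Lemma \ref{l-position-kn} and Theorem \ref{t-kn-pm} to show that the single cut position $\delta=|R\cap\{0,\dots,m+n\}|$ is forced by $\sigma$, so that each reduction step is unambiguous. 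Your ``check $\Phi(\text{output})=\sigma$ and check the recursion returns $D$'' steps are likewise only listed, not carried out.

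The treatment of $m=kn-1$ is a second gap. The ``rank-negating symmetry'' you invoke does not do what you need: the rank complement $\mathfrak{c}(R)=(\max R)-R$ is an involution on $\cal D_{m,n}$ for the \emph{same} pair $(m,n)$ and relates $\Phi$ to the $\chi$ map; it does not convert $m\equiv -1$ into $m\equiv +1 \pmod n$. Transposition lands in $\cal D_{n,m}$, which is generally not a Fuss pair (e.g.\ $(m,n)=(7,4)$ transposes to $(4,7)$). Accordingly the paper handles $m=kn+n-1$ by a separate and noticeably more delicate argument (the inductive claim about consecutive $N$-labels in the proof of Theorem \ref{t-kn-pm}), and your alternative of ``supplying the extra Dyck constraints'' is again exactly the unproven content. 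As it stands the proposal is a plausible plan whose two essential steps are both left open.
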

The known inverse bijections for the Fuss case are by no means simple. They rely on the $\bounce$ statistic.

The major contribution of this paper is the ReciPhi algorithm, which is an efficient search algorithm for inverting the $\Phi$ map. 
Roughly speaking, given $\sigma\in \cal D_{m,n}$, by searching through a $d$-array tree of certain depth, we can output all $D$ such that $\Phi(D)=\sigma$, where
$d$ is the remainder of $m$ when divided by $n$. In particular, we show that $\Phi$ is invertible for the Fuss case by giving a simple recursive algorithm FussiPhi.

The discovery of the ReciPhi algorithm relies on Theorems \ref{t-rank-set} and \ref{t-EN-sequence}, 
which state that deciphering $D$ for given $\sigma=\Phi(D)$ is equivalent to deciphering either the rank sequence $r(D)$, or the EN-sequence
$\rho(D)=\Phi(D^T)$, where $D^T$ is the transpose of $D$. Theorem \ref{t-EN-sequence} was discovered  a bit earlier by Ceballos-Denton-Hanusa in \cite{zeta-map}. Indeed, their paper overlaps a lot of the author's work: Sections 5,6, and 9 overlaps this draft, and Section 4 overlaps an earlier draft of the author \cite{Xin-Rank-Complement} on $(m,n)$-cores, which is in bijection with $(m,n)$-Dyck paths by Anderson's map \cite{Anderson}. Thus it is necessary to explain what is new in this paper.

The paper is organized as follows. Section 1 is this introduction. Section \ref{sec:notation} introduces some basic notations and establishes Proposition \ref{p-Rank-Set}, which gives the fundamental characterization of the rank sets of $(m,n)$-Dyck paths. This allows rank sets to play a leading role in our development. Note that rank also appear as level (see, e.g., \cite{sweepmap} and \cite{Amstrong-Catalan}), but was never used as the basic tool for inverting the $\Phi$ map. 
Section \ref{sec:two} includes Theorems \ref{t-rank-set} and \ref{t-EN-sequence} on inverting the $\Phi$ map using rank sets and EN-sequences. The former seems new while the latter was known by Ceballos-Denton-Hanusa as we have explained. Combining the two theorems allows us to extract some partial information that are essential to sharply narrow down the search in the ReciPhi algorithm. Section \ref{sec:Left} introduces the left and right operations on rank sequences. Ceballos-Denton-Hanusa also found these operations in \cite{zeta-map} and used them to invert the $\Phi$ map. They were only able to search for $\Phi^{-1}(\sigma)$ through a much larger tree. In Lemma \ref{l-rec-left} we give more explicit formulas than that in \cite{zeta-map} for the SW-sequences and rank sequences under left operations. Section \ref{sec:ReciPhi} includes two algorithms: the FussiPhi algorithm for inverting the $\Phi$ map in the Fuss case, and the ReciPhi algorithm for inverting the $\Phi$ map by searching.


\section{Notations\label{sec:notation}}

A path $P=p_1\cdots p_{n}$ is a $(u,d)$-sequence depicted in the plane as a sequence of lattice points $P_0,\dots, P_{n}$ such that
$P_0=(0,0)$ and $P_i-P_{i-1}$ equals $(0,1)$ if $p_i=u$ and $(1,0)$ if $p_i=d$.
A $u$ step will also appears as an $S$ or $N$, standing for south or north end; Similarly a $d$ step will appear as an $E$ or $W$, standing for east or west end.
We denote by $\cal F_{m,n}$ the set of all (free) paths from $(0,0)$ to $(m,n)$. Thus $P\in \cal F_{m,n}$ consists of $n$ up steps and $m$ down steps.
An $(m,n)$-Dyck path $D$ is a path in $\cal F_{m,n}$ that never goes below the diagonal line $y=\frac{n}{m} x$. We denote by $\cal D_{m,n}$ the set of all $(m,n)$-Dyck paths.

We define the rank of a lattice point $(a,b)$ by $r(a,b)=mb-na$. Then all the lattice points $\{(a,b): 0\le a \le m, \ 0\le b \le n\}$ in the $m$ by $n$ rectangle are in bijection with their ranks except for  $r(0,0)=r(m,n)=0$. Thus we can encode a path $P \in \cal F_{m,n}$ by its rank set $R(P)=\{ r(P_0),\dots, r(P_{m+n-1}) \}= \{r(P_1),\dots, r(P_{m+n})\} $ of $m+n$ distinct ranks.
We call the sequence $r(P)=(r(P_0),\dots, r(P_{m+n-1}))$ the rank walk of $P$.
Denote by $\mathfrak{R} _{m,n}$ and $\mathfrak{R}^+_{m,n}$ the set of rank sets of $(m,n)$-paths and $(m,n)$-Dyck paths, respectively. The rank set of $(m,n)$-Dyck paths has only nonnegative ranks.

The following result completely characterizes elements of $\mathfrak{R}_{m,n}$.
\begin{prop}\label{p-Rank-Set}
Suppose $(m,n)$ is a coprime pair. Let $R=\{r_1,\dots, r_{m+n}\}$ be a set of ranks with $0\in R$. Then the following statements are equivalent.
\begin{enumerate}
\item  $R\in  \mathfrak{R}_{m,n}$, i.e., $R$ is the rank set of an $(m,n)$-path $P$.

  \item If $\ell$ belongs to $ R$ then at least one of $\ell+m$ and $\ell-n$ belongs to $R$.

  \item If $\ell\in R$ then exactly one of $\ell+m$ and $\ell-n$ belongs to $R$.

  \item If $\ell$ belongs to $ R$ then at least one of $\ell-m$ and $\ell+n$ belongs to $R$.

  \item If $\ell\in R$ then exactly one of $\ell-m$ and $\ell+n$ belongs to $R$.
\end{enumerate}
\end{prop}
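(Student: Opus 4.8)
The plan rests on one elementary observation: since $m\equiv -n\pmod{m+n}$, the two ``forward'' moves $\ell\mapsto\ell+m$ and $\ell\mapsto\ell-n$ land in the \emph{same} residue class modulo $m+n$; and because $\gcd(m,m+n)=\gcd(m,n)=1$, the shift $x\mapsto x+m$ is a single $(m+n)$-cycle on $\Z/(m+n)\Z$. I would first prove the equivalences $(1)\Leftrightarrow(2)\Leftrightarrow(3)$ and then deduce $(4)$ and $(5)$ from a reflection symmetry. For $(1)\Rightarrow(3)$ I read the conditions off the rank walk: if $R=R(P)$ with rank walk $r(P_0),\dots,r(P_{m+n})$, then each $\ell=r(P_{i-1})$ has its successor $r(P_i)\in\{\ell+m,\ell-n\}$ again in $R$ (the wrap-around case uses $r(P_{m+n})=0\in R$), which gives ``at least one''. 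Moreover consecutive ranks shift residue by $m$, so the $m+n$ elements of $R$ realize all $m+n$ residues modulo $m+n$, exactly one each; since $\ell+m$ and $\ell-n$ share a residue and are distinct, at most one of them lies in $R$. Together this is $(3)$, and $(3)\Rightarrow(2)$ is immediate.

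The crux is $(2)\Rightarrow(3)$, where ``at least one'' must be upgraded to ``exactly one''. Writing $\chi$ for the indicator function of $R$, condition $(2)$ reads $\chi(\ell)\le\chi(\ell+m)+\chi(\ell-n)$ for every $\ell\in\Z$. Summing over a fixed residue class $j$ modulo $m+n$, and letting $c_j$ count the elements of $R$ in class $j$, both terms on the right collapse into class $j+m$, yielding $c_j\le 2c_{j+m}$. Hence if some class were empty, propagating $c=0$ backward along the $(m+n)$-cycle generated by $+m$ would force \emph{every} class to be empty, contradicting $|R|=m+n$. So every class is nonempty, and since $\sum_j c_j=m+n$ over exactly $m+n$ classes we get $c_j=1$ for all $j$, i.e.\ $R$ meets each residue class once. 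Then $\ell+m$ and $\ell-n$, sharing a residue, cannot both lie in $R$, so $(2)$'s ``at least one'' is in fact ``exactly one'', which is $(3)$. I expect this counting step to be the main obstacle, precisely because $(2)$ on its own permits both successors and the naive ``choose a successor'' map need not be injective.

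To close the loop with $(3)\Rightarrow(1)$ I would build the path directly. Under $(3)$ (hence one element per residue class) each $\ell\in R$ has a unique successor in $R$; starting from $0\in R$ and iterating gives $0=\ell_0,\ell_1,\dots$, whose residues run once through the full $(m+n)$-cycle, so $\ell_0,\dots,\ell_{m+n-1}$ are distinct, exhaust $R$, and $\ell_{m+n}=0$. Recording a $u$ whenever the successor is $+m$ and a $d$ whenever it is $-n$ produces a word $P$ with, say, $j$ up-steps and $m+n-j$ down-steps; the net change $jm-(m+n-j)n=0$ forces $j=n$, so $P\in\cal F_{m,n}$, and by construction its rank walk is $\ell_0,\dots,\ell_{m+n}$, giving $R(P)=R$. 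Note that realizability is automatic: the $\ell_k$ arise as ranks of the genuine lattice points of $P$.

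Finally, conditions $(4)$ and $(5)$ for $R$ are exactly conditions $(2)$ and $(3)$ for the reflected set $-R=\{-\ell:\ell\in R\}$, since the substitution $\ell\mapsto-\ell$ carries the pair $\{\ell-m,\ell+n\}$ to $\{-\ell+m,-\ell-n\}$. Because reversing a path $P=p_1\cdots p_{m+n}$ to $P'=p_{m+n}\cdots p_1$ negates its rank set, one checks $R(P')=-R(P)$, so $-R$ is a rank set iff $R$ is. Applying the already-established equivalence $(1)\Leftrightarrow(2)\Leftrightarrow(3)$ to $-R$ then yields $(1)\Leftrightarrow(4)\Leftrightarrow(5)$, completing the cycle of equivalences.
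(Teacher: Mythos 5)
Your proof is correct, but the route differs from the paper's in its key step. The paper proves each of $(h)\Rightarrow(1)$ for $h=2,\dots,5$ directly by a greedy walk on $R$ (prefer $\ell\mapsto\ell+m$ when possible, else $\ell\mapsto\ell-n$), and then uses coprimality on the first repeated value to show the walk closes up after exactly $m+n$ steps; the ``exactly one'' statements and the impossibility of both $\ell+m$ and $\ell-n$ lying in $R$ are extracted from the same collision argument. You instead first upgrade $(2)$ to $(3)$ by a residue-class count: summing the indicator inequality $\chi(\ell)\le\chi(\ell+m)+\chi(\ell-n)$ over a class $j$ modulo $m+n$ gives $c_j\le 2c_{j+m}$, and propagating around the single $(m+n)$-cycle generated by $+m$ forces $c_j=1$ for all $j$, i.e.\ $R$ is a transversal of $\Z/(m+n)\Z$; the walk then becomes deterministic and the closure argument is immediate. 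You also obtain $(4)$ and $(5)$ from $(2)$ and $(3)$ by the reflection $R\mapsto -R$ together with path reversal, rather than repeating the walk argument ``similarly'' as the paper does. What your version buys is an explicit, reusable structural fact (rank sets meet each residue class modulo $m+n$ exactly once, which is only implicit in the paper) and a halving of the case analysis via the symmetry; what the paper's version buys is brevity, since the greedy walk handles the weaker ``at least one'' hypotheses directly without first establishing the transversal property. All the individual steps you give (the summation over a residue class, the propagation of emptiness, the net-displacement computation $jm-(m+n-j)n=0$, and the identity $R(P')=-R(P)$ for the reversed path) check out.
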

\begin{proof}
Statement (1) clearly implies statement $h$ for $2\le h \le 5$.

We prove the converse implication by constructing a walk $w$ on $R$ starting with $\ell_1=0$, and simultaneously obtain the path $P$ with $R(P)=R$.
We only consider the $h=2$ case, the others are similar.
For $i\ge 2$, set $\ell_{i}= \ell_{i-1}+m$ if it belongs to $R$, and set $p_{i-1}=u$; Otherwise set
  $\ell_{i}=\ell_{i-1}-n$, which must belongs to $R$ by Condition 2 or 3, and set $p_{i-1}=d$.

  Since we are walking in the finite set $R$, we must have $\ell_{k_1}=\ell_{k_2}$ for some $1\le k_1<k_2\le m+n+1$. Assume that we walk from $\ell_{k_1}$ to $\ell_{k_2}$ by $b$ $u$ steps and $a$ $d$ steps. Then $bm-an=0$. The coprimality of $m$ and $n$ forces $a=sm,\ b=sn$ for some $s\ge 0$. Then the natural bound $0<a+b\le m+n$ implies that $a=n$ and $b=m$. Hence we obtain a closed walk of length $m+n$ from $\ell_1=0$ to $\ell_{m+n+1}=0$. Consequently, $P=p_1p_2\cdots p_{m+n}$ is the desired path with $R(P)=R$.

To see that we can not have both $\ell+m$ and $\ell-n$ belongs to $R$, suppose the walk from $\ell-n$ to $\ell+m$ takes $b$ $u$ steps and $a$ $d$ steps with $0<a+b< m+n$. Then
 $bm-an=m+n$, i.e., $(b-1)m-(a+1)n=0$. This implies that $b-1=sn$ and $a+1=sm$ for some $s\ge 0$, which is clearly impossible.
\end{proof}
\def\rc{\mathfrak{c}}

A direct consequence of Proposition \ref{p-Rank-Set} is the following.
\begin{cor}
Let $R$ be a rank set. Then i) $R\in \mathfrak{R}^+_{m,n}$ if and only if $R\in \mathfrak{R}^+_{n,m}$; ii) $R\in \mathfrak{R}^+_{m,n}$ if and only if
the rank complement $\rc (R)= (\max R) -R$ is in $\mathfrak{R}^+_{m,n}$.
\end{cor}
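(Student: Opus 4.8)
The plan is to derive both parts directly from the list of equivalent conditions in Proposition \ref{p-Rank-Set}, exploiting the symmetry already built into that list. The guiding observation is that the conditions characterizing $\mathfrak{R}_{m,n}$ are stable under the two operations in play: interchanging $m$ and $n$ merely permutes conditions (2)--(5), while subtracting from $\max R$ flips the sign of each shift. Throughout I would use the description $\mathfrak{R}^+_{m,n}=\{R\in\mathfrak{R}_{m,n}: R\subseteq \Z_{\ge 0}\}$, which holds because the rank $r(a,b)=mb-na$ of a lattice point is nonnegative exactly when $(a,b)$ lies on or above the diagonal $y=\frac nm x$; thus a path is Dyck iff every rank occurring on it is nonnegative.

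For part i) I would first note that $(m,n)$ is coprime iff $(n,m)$ is, so Proposition \ref{p-Rank-Set} applies equally with the roles of $m$ and $n$ exchanged. Reading condition (3) for the pair $(n,m)$ gives: $\ell\in R$ implies exactly one of $\ell+n$ and $\ell-m$ lies in $R$. This is verbatim condition (5) for the pair $(m,n)$. Hence $R\in\mathfrak{R}_{n,m}$ iff $R\in\mathfrak{R}_{m,n}$, i.e. $\mathfrak{R}_{m,n}=\mathfrak{R}_{n,m}$ as collections of subsets of $\Z$. The extra constraint $R\subseteq\Z_{\ge 0}$ cutting out the Dyck rank sets does not refer to $m$ or $n$, so intersecting both sides with it yields $\mathfrak{R}^+_{m,n}=\mathfrak{R}^+_{n,m}$, which is exactly i).

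For part ii) I would set $M=\max R$ and first record that $\rc$ is an involution on the relevant sets: if $R\in\mathfrak{R}^+_{m,n}$ then $0\in R$ and $R\subseteq\Z_{\ge 0}$, so $\min R=0$ and $\max \rc(R)=M-\min R=M$, giving $\rc(\rc(R))=R$. It therefore suffices to prove one implication. Assuming $R\in\mathfrak{R}^+_{m,n}$, I would verify condition (3) for $\rc(R)$. Writing a general element of $\rc(R)$ as $\ell'=M-\ell$ with $\ell\in R$, the identities $\ell'+m=M-(\ell-m)$ and $\ell'-n=M-(\ell+n)$ show that $\ell'+m\in\rc(R)\iff \ell-m\in R$ and $\ell'-n\in\rc(R)\iff \ell+n\in R$. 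Thus ``exactly one of $\ell'+m,\ell'-n$ lies in $\rc(R)$'' is equivalent to ``exactly one of $\ell-m,\ell+n$ lies in $R$'', which is condition (5) for $R$ and holds by Proposition \ref{p-Rank-Set}. Hence $\rc(R)\in\mathfrak{R}_{m,n}$; since $R\subseteq\{0,\dots,M\}$ forces $\rc(R)\subseteq\{0,\dots,M\}\subseteq\Z_{\ge 0}$, and $M\in R$ gives $0\in\rc(R)$, we conclude $\rc(R)\in\mathfrak{R}^+_{m,n}$. Applying this implication to $\rc(R)$ and invoking the involution property yields the converse.

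I do not expect a genuine obstacle here; the content is essentially bookkeeping. The only point requiring care is matching the correct member of the equivalent list---condition (5) for $(m,n)$---against the transformed condition (3) so that the shifts $\pm m,\pm n$ line up, together with the routine verification that the cardinality ($m+n$ elements), the membership $0\in\rc(R)$, and the nonnegativity all survive each operation, so that Proposition \ref{p-Rank-Set} is genuinely applicable to the transformed set.
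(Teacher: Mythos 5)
Your proof is correct and follows exactly the route the paper intends: the paper offers no proof, labeling the corollary ``a direct consequence of Proposition~\ref{p-Rank-Set},'' and your argument is the natural fleshing-out of that, matching condition (3) for $(n,m)$ and condition (3) for $\rc(R)$ against condition (5) for $(m,n)$. The bookkeeping about cardinality, $0\in\rc(R)$, and nonnegativity is all handled properly.
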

We use $R^T$ to indicate that we switch the roles of $m$ and $n$, i.e,, $R^T=R$ is a rank set in $\mathfrak{R}^+{n,m}$. This corresponds to the transpose of Dyck paths.
See \cite{Xin-Rank-Complement} for geometric meaning of rank complement.

From now on, we shall identify a Dyck path $D$ with its rank set $R=R(D)$. So a Dyck path $R$ means the Dyck path $D$ with rank set $R(D)=R$, and a rank set $D$ means $R(D)$.
The reason for making this convention is that many operations are easy to perform on rank sets.

Denote by $S(D)$ the set of south ends of $D$, i.e, starting points of a $u$ step of $D$. In our convention, $S(D)$ is identified with $S(R)=\{ s_0,s_1,\dots, s_{n-1} \}$, which is the set of the ranks of the south ends of $D$. In Example \ref{exa-D-rD}, $S(D)=\{0,3,6,7,9 \}$. An easy exercise is ${ S(D)} \bmod{n} =\{0,1,\dots, n-1\}$.
We can similarly define $E(D),N(D),W(D)$ to be the ranks of the east ends, north ends, and west ends of $D$ respectively. We have the following equalities:
\begin{align}
S(D)& =N(D)-m = R(D) \cap (R(D)-m), \label{e-SN} \\
W(D)&= E(D)+n= R(D) \cap (R(D)+n),\label{e-WE}\\
  R(D)&=S(D)\uplus W(D)= E(D)\uplus N(D), \label{e-SW}
\end{align}
where $\uplus$ means disjoint union. Equations in \eqref{e-SW} follow from the fact that each node of $D$ is either a south end or a west end, and similarly is either an east end or a north end. Note that $0$ is a south end as the node $(0,0)$ and an east end as the node $(m,n)$.

Denote the sorted increasing sequence of $R=R(D)$ by
$$\r(D)=\sort(R)= (\r_1(R), \r_2(R),\dots, \r_{m+n}(R))= (\r_1,\dots, \r_{m+n}), \ \text{with } \r_1=0.$$
The SW-sequence of $D$ is defined by
$$ \sigma (D) =\sigma(R)=(\sigma_1,\sigma_2, \dots, \sigma_{m+n}),$$
where $\sigma_i$ is $S$ if $\r_i\in S(D)$ and is $W$ if $\r_i\in W(D)$.
Then the sweep map $\Phi(D)$ is just $\Phi(D)= \sigma (D) \big|_{S=u,W=d}$. We use the notation $\sigma\in \cal D_{m,n}$ for
$\sigma\big|_{S=u,W=d} \in \cal D_{m,n}$.

Similarly, the EN-sequence of $D$ is
defined by
$$ \rho (D) =\rho(R)=(\rho_1,\rho_2, \dots, \rho_{m+n}),$$
where $\rho_i$ is $E$ if $\r_i\in E(D)$ and is $N$ if $\r_i\in N(D)$. See Figure \ref{fig:Dyckpath}. 

\begin{figure}[ht]
\begin{center}
\begin{displaymath}
\mbox{\dessin{width=420pt}{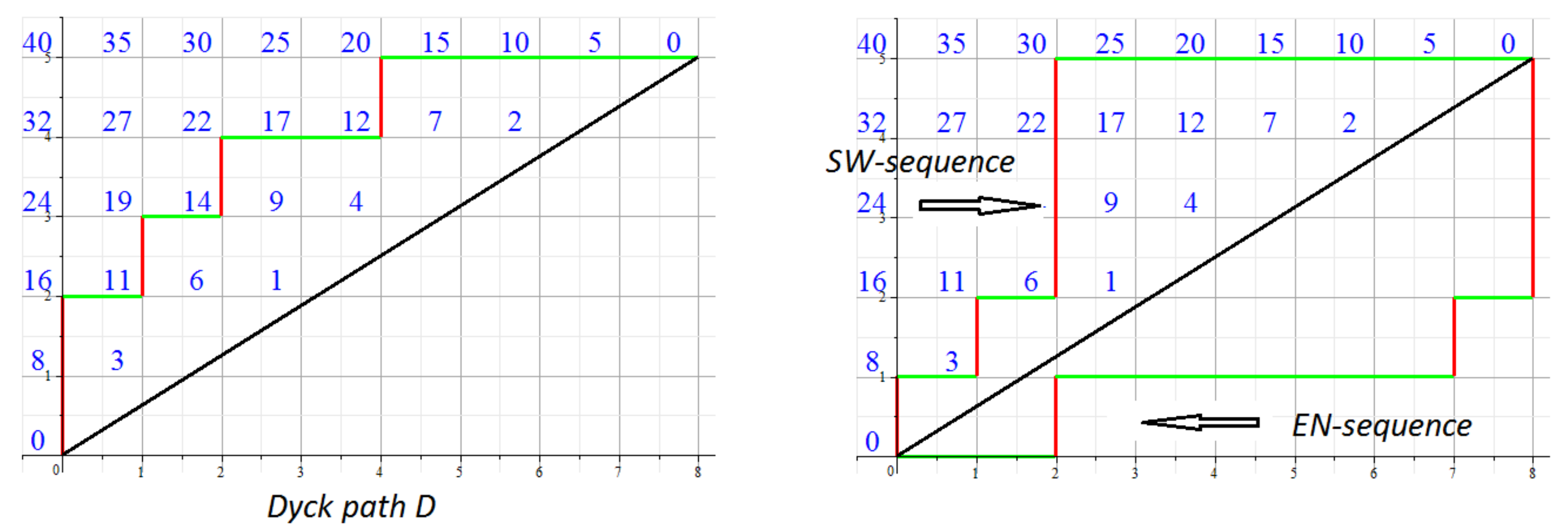}}
\end{displaymath}
\caption{A Dyck path $D$ with $\area(D)= 7$, its SW-sequence, and its EN-sequence (put below the diagonal).}
\label{fig:Dyckpath}
\end{center}
\end{figure}
\def\rev{\texttt{rev}}

The SW-sequence and EN-sequence are closely related by $\rho(D)=\sigma(D^T)|_{S=N,E=W}$, since transpose keeps the rank set, but takes south ends to east ends, and west ends to north ends. A nicer description is the following.
\begin{lem}\label{l-chi}
For $R\in \mathfrak{R}^+_{m,n}$, we have
$$\rev (\rho(R))=(\rho_{m+n},\dots, \rho_1)= \sigma(\rc ( R))|_{S=N,W=E} .$$
\end{lem}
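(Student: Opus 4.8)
The plan is to unwind every definition and reduce the claimed identity to two set-theoretic computations describing how the rank complement interacts with the south/west and east/north decompositions of a rank set. Throughout I write $M=\max R=\r_{m+n}(R)$, so that $\rc(R)=M-R$ by definition; note also that $\rc(R)\in\mathfrak{R}^+_{m,n}$ by the Corollary, so $\rc(R)$ is a genuine rank set and all the identities \eqref{e-SN}, \eqref{e-WE}, \eqref{e-SW} apply to it and not only to $R$.

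First I would record the effect of $\rc$ on the sorted rank sequence. Since $x\mapsto M-x$ is an order-reversing bijection and $\r_1(R)=0$, $\r_{m+n}(R)=M$, the sorted sequence of $\rc(R)$ satisfies $\r_i(\rc(R))=M-\r_{m+n+1-i}(R)$ for every $i$. This is precisely what produces the reversal $\rev$ on the left-hand side: index $i$ in a sequence governed by $\sort(\rc(R))$ corresponds to index $m+n+1-i$ in a sequence governed by $\sort(R)$.

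The heart of the argument is to express the south and west ends of $\rc(R)$ through the north and east ends of $R$. Rearranging \eqref{e-SN} and \eqref{e-WE} gives $N(R)=R\cap(R+m)$ and $E(R)=R\cap(R-n)$. Using that $t\mapsto M-t$ carries intersections to intersections, i.e. $(M-A)\cap(M-B)=M-(A\cap B)$, I would compute
$$S(\rc(R))=\rc(R)\cap(\rc(R)-m)=(M-R)\cap(M-(R+m))=M-(R\cap(R+m))=M-N(R),$$
and symmetrically $W(\rc(R))=\rc(R)\cap(\rc(R)+n)=M-(R\cap(R-n))=M-E(R)$.

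Combining the two steps finishes the proof. We have $\sigma_i(\rc(R))=S$ exactly when $\r_i(\rc(R))\in S(\rc(R))=M-N(R)$, i.e. when $M-\r_{m+n+1-i}(R)\in M-N(R)$, i.e. when $\r_{m+n+1-i}(R)\in N(R)$, i.e. when $\rho_{m+n+1-i}(R)=N$; likewise $\sigma_i(\rc(R))=W$ iff $\rho_{m+n+1-i}(R)=E$. Applying the relabeling $S=N$, $W=E$ therefore sends the $i$-th entry of $\sigma(\rc(R))$ to $\rho_{m+n+1-i}(R)$, which is exactly the $i$-th entry of $\rev(\rho(R))$. The main obstacle is the bookkeeping in the middle step: one must verify that complementation commutes with intersection (valid because $t\mapsto M-t$ is a bijection) and keep careful track of the shifts $\rc(R)\pm m$, $\rc(R)\pm n$; everything else is a direct translation of the definitions of $\sigma$, $\rho$, and $\rev$.
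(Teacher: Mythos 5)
Your proof is correct and follows essentially the same route as the paper: the paper also observes that $\sort(\rc(R))$ is the reversed sequence $(M-\r_{m+n},\dots,M-\r_1)$ and that rank complement exchanges south ends with north ends and west ends with east ends, though it states the latter as a one-line geometric fact. Your derivation of $S(\rc(R))=M-N(R)$ and $W(\rc(R))=M-E(R)$ from the identities \eqref{e-SN} and \eqref{e-WE} simply makes that step explicit and rigorous.
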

\begin{proof}
Let $\r=\sort(R)=(\r_1,\dots,\r_{m+n})$ and $M=\max R$. Then
$$ \sort(\rc ( R))= (M-\r_{m+n},\dots, M-\r_0).$$
Since rank complement takes south ends to north ends, and west ends to east ends, and vice vasa, the lemma follows.
\end{proof}

\section{Rank sequence and EN-sequence for inverting the $\Phi$ map\label{sec:two}}
For an SW-sequence $\sigma$, it is convenient to label, from left to right, the $i$-th $S$ by $S_i$ for $1\le i \le n$ and the $j$-th $W$ by $W_j$ for $1\le j \le m$.
The rank $\r(S_i)$ refers to the $i$-th largest element in $S(R)$. In formula, we have $\r(S_i)=\r_{\sigma^{-1}(S_i)}$, where $\sigma^{-1}(S_i)$ is the position $i'$ of $S_i$, i.e., $\sigma_{i'}=S_i$. Similar notations apply for $W_j$, and $E_j, N_i$ for the EN-sequence.

For the Dyck path in Figure \ref{fig:Dyckpath}, we have
$$ \left[ \begin {array}{l|ccccccccccccc}\text{position} &1&2&3&4&5&6&7&8&9&10&11&12&13
\\ 
\sigma(D) & S_{{1}}&W_{{1}}&S_{{2}}&W_{{2}}&S_{{3}}&S_{{4}}&S
_{{5}}&W_{{3}}&W_{{4}}&W_{{5}}&W_{{6}}&W_{{7}}&W_{{8}}\\
\r(D) &0&5&8&10&11&12&14&15&16&17&19&20&22
\\
\rho(D) &E_{{1}}&E_{{2}}&N_{{1}}&E_{{3}}&E_{{4}}&E_{{5}}&E
_{{6}}&E_{{7}}&N_{{2}}&E_{{8}}&N_{{3}}&N_{{4}}&N_{{5}}\end {array} \right]
  $$

\begin{defn}
Given an SW-sequence $\sigma=(\sigma_1,\dots,\sigma_{m+n})$, a nonnegative rank set $R\ni 0$ or $\r=\sort(R)$ is said to be compatible with  $\sigma$ if
$\r(S_i)+m\in R$ for $1\le i\le n$ and $\r(W_i)-n\in R$ for $1\le j\le m$.
\end{defn}

\begin{theo}\label{t-rank-set}
An SW-sequence $\sigma=(\sigma_1,\dots,\sigma_{m+n})$ is equal to $\sigma(D)$ for some $(m,n)$-Dyck path $D$ if and only if there is a nonnegative rank set $R$ compatible with $\sigma$.
\end{theo}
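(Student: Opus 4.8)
The plan is to prove both implications directly from the rank-set descriptions of south and west ends in \eqref{e-SN}--\eqref{e-SW} together with Proposition \ref{p-Rank-Set}. The guiding observation is that an element $\ell$ of a rank set $R$ is a south end precisely when $\ell+m\in R$ and a west end precisely when $\ell-n\in R$. Thus the compatibility condition on $\sigma$ is nothing but a position-by-position prescription of which of the two neighbors $\ell+m,\ell-n$ lands back in $R$, and the whole argument becomes a matter of matching this prescription against the characterizations in Proposition \ref{p-Rank-Set}.

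For the forward implication, suppose $\sigma=\sigma(D)$ and simply take $R=R(D)$. Since $D$ is a Dyck path, $R$ is a nonnegative rank set containing $0$, and $\sort(R)=\r(D)$, so the $m+n$ positions of $\sigma$ are indexed by the elements of $R$ in increasing order. By definition $\sigma(D)_{i'}=S$ exactly when $\r_{i'}\in S(D)$, and by \eqref{e-SN} this forces $\r_{i'}+m\in R$; likewise $\sigma(D)_{i'}=W$ means $\r_{i'}\in W(D)$, i.e.\ $\r_{i'}-n\in R$ by \eqref{e-WE}. These are precisely the two halves of the definition of compatibility, so $R=R(D)$ is compatible with $\sigma$.

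For the backward implication, let $R$ be nonnegative and compatible with $\sigma$. Fix $\ell\in R$ and let $i'$ be its position in $\sort(R)$. The label $\sigma_{i'}$ is either $S$ or $W$, and compatibility gives $\ell+m\in R$ in the first case and $\ell-n\in R$ in the second; in either case at least one of $\ell+m,\ell-n$ lies in $R$, which is exactly statement (2) of Proposition \ref{p-Rank-Set}. Hence $R\in\mathfrak{R}_{m,n}$, and since $R$ is nonnegative with $0\in R$ this upgrades to $R\in\mathfrak{R}^+_{m,n}$ (the rank of $(a,b)$ is $mb-na$, which vanishes on the diagonal, so nonnegative ranks mean the path never dips below it). Write $R=R(D)$ for the resulting Dyck path $D$. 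To finish I would invoke the stronger statement (3) of Proposition \ref{p-Rank-Set}: for each $\ell=\r_{i'}\in R$ \emph{exactly} one of $\ell+m,\ell-n$ belongs to $R$. If $\sigma_{i'}=S$ then compatibility puts $\ell+m\in R$, so by uniqueness $\ell-n\notin R$, whence $\ell\in S(D)\setminus W(D)$ and $\sigma(D)_{i'}=S$; the case $\sigma_{i'}=W$ is symmetric. Therefore $\sigma(D)=\sigma$.

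The step I expect to carry the real content is the passage from compatibility to $\sigma(D)=\sigma$ in the backward direction. Compatibility only asserts that the neighbor named by $\sigma_{i'}$ lies in $R$; a priori the \emph{other} neighbor could also lie in $R$, in which case $\sigma(D)$ might label the position differently from $\sigma$. What rules this out is precisely the exclusivity in Proposition \ref{p-Rank-Set}(3): once $R$ is known to be a genuine rank set, exactly one neighbor survives, and compatibility has already told us it is the one dictated by $\sigma$. In short, statement (2) of the Proposition is used to manufacture the path and statement (3) to pin down its SW-sequence, and the interplay of these two halves is the crux of the argument.
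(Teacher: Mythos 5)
Your proof is correct and follows essentially the same route as the paper: the forward direction via equations \eqref{e-SN}--\eqref{e-WE}, and the backward direction by verifying condition (2) of Proposition \ref{p-Rank-Set} to produce the Dyck path. The only difference is that you spell out, via the exclusivity in Proposition \ref{p-Rank-Set}(3), why $\sigma(D)=\sigma$ rather than merely $\sigma(D)$ agreeing with $\sigma$ at the positions compatibility controls -- a step the paper's proof leaves implicit -- and your identification of that point as the crux is accurate.
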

\begin{proof}
The sufficiency is by equations \eqref{e-SN} and \eqref{e-WE}. For the necessity, if $\r=\sort(R)$ is compatible with $\sigma$, then each rank $r$ is either $r=\r(S_i)$ then $r+m\in R$
 or $r=\r(W_j)$ then $r-n\in R$. Thus by Proposition \ref{p-Rank-Set}, $R \in \cal R^+_{m,n}$ and $\r(D)=\r$ for some Dyck path $D$. The lemma then holds since $\sigma(D)=\sigma(R)=\sigma$.
\end{proof}

\begin{exa}\label{exa-R0}
The rank set $R_0=\{0,1,\dots, m+n-1\}$ corresponds to the unique Dyck path of area $0$.
Since $i+m\in R_0$ for $0\le i\le n-1$, $\sigma(R_0)=S^n W^m$, i.e., $n$ $S$'s followed by $m$ $W$'s. Conversely, if we are given
$\sigma=S^nW^m$, we can deduce that $\r(W_1)=n+\r(E_1)=n$. Now $\r(S_i)<n$ for all $i$ and they are distinct. Thus $\r(S_i)=i-1$ for all $i$, in particular
$\r(N_n)=\r(S_n)+m=n+m-1$ is $\max(\r)$. It follows that $\r=\sort(R_0)$.
\end{exa}

\begin{defn}
For an SW-sequence $\sigma=(\sigma_1,\dots,\sigma_{m+n})\in \cal D_{m,n}$ and an EN-sequence $\rho=(\rho_1,\dots, \rho_{m+n})\in \cal D_{m,n} $. Define
$G(\sigma,\rho)$ to be the directed graph on the vertices $\{1,2,\dots, m+n\}$, with edges $\sigma^{-1}(S_i)\to \rho^{-1}(N_i), \ 1\le i\le n$ and $\rho^{-1}(E_j) \to \sigma^{-1}(W_j),\ 1\le j\le m$. If $G(\sigma,\rho)$ is a directed cycle of length $m+n$, and produces an increasing rank sequence $\r$ by the three rules i) $\r_1=0$,
ii) $\r(S_i)=\r(N_i)-m$, for $1\le i\le n$;   iii) $\r(W_j)=\r(E_j)+n$ for $1\le j\le m$, then $\rho$ is said to be compatible with $\sigma$.
\end{defn}

\begin{theo}\label{t-EN-sequence}
An SW-sequence $\sigma=(\sigma_1,\dots,\sigma_{m+n})$ is equal to $\sigma(D)$ for some $(m,n)$-Dyck path $D$ if and only if there is an EN-sequence $\rho$ compatible with 
$\sigma$. 
\end{theo}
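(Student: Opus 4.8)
The plan is to prove both directions by taking the natural candidate $\rho=\rho(D)$ for the necessity and by reducing the sufficiency to Theorem~\ref{t-rank-set}.

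For the necessity (``only if''), suppose $\sigma=\sigma(D)$ and set $\rho=\rho(D)$. First I would check that $\rho(D)$ satisfies the three rank rules with the genuine sequence $\r=\r(D)$. This is immediate from the order-preserving bijections behind \eqref{e-SN} and \eqref{e-WE}: since $N(D)=S(D)+m$ and adding $m$ preserves order, the $i$-th smallest north end is $\r(S_i)+m$, so $\r(N_i)=\r(S_i)+m$, which is rule (ii); likewise $W(D)=E(D)+n$ gives $\r(W_j)=\r(E_j)+n$, which is rule (iii), while rule (i) holds because $0$ is the least rank of a Dyck path. Thus the rules reproduce the increasing sequence $\r(D)$. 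It then remains to show that $G(\sigma(D),\rho(D))$ is a single directed cycle of length $m+n$. Here I would identify the edges of $G$ with the steps of the rank walk of $D$: the edge joining $S_i$ and $N_i$ records the $u$-step from a south end to its north end (rank change $+m$), and the edge joining $E_j$ and $W_j$ records the $d$-step from a west end to its east end (rank change $-n$). Since the rank walk is a single closed walk that visits all $m+n$ ranks exactly once before returning to $0$ — precisely the closing-up argument from the proof of Proposition~\ref{p-Rank-Set}, where coprimality of $m$ and $n$ forces the first return to have length $m+n$ — the graph $G$ is a single $(m+n)$-cycle through the vertex of rank $0$. Hence $\rho(D)$ is compatible with $\sigma$.

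For the sufficiency (``if''), suppose some EN-sequence $\rho$ is compatible with $\sigma$. By definition the three rules then produce a well-defined increasing rank sequence $\r=(\r_1,\dots,\r_{m+n})$ with $\r_1=0$; put $R=\{\r_1,\dots,\r_{m+n}\}$. Because $\r$ is increasing and starts at $0$, the set $R$ consists of $m+n$ distinct nonnegative ranks with $0\in R$. I would then verify that $R$ is compatible with $\sigma$ in the sense required by Theorem~\ref{t-rank-set}: rule (ii) gives $\r(S_i)+m=\r(N_i)\in R$ for each $i$, and rule (iii) gives $\r(W_j)-n=\r(E_j)\in R$ for each $j$, which are exactly the two membership conditions in the definition of being compatible with $\sigma$. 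Applying Theorem~\ref{t-rank-set} to this $R$ then produces a Dyck path $D$ with $\r(D)=\r$ and $\sigma(D)=\sigma$, as desired.

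The main obstacle is the cycle claim in the necessity direction. Checking the three rank rules is routine once the order-preserving bijections are in place, but showing that $G(\sigma(D),\rho(D))$ is a \emph{single} cycle — rather than a disjoint union of several cycles — is the real content, and it is exactly here that coprimality enters through the length-$(m+n)$ closing-up argument of Proposition~\ref{p-Rank-Set}. I would take care to state that following the edges of $G$ is the same as following the rank walk step by step, so that the connectedness of $G$ is inherited from the fact that the rank walk is one closed walk and not several. The sufficiency direction, by contrast, is essentially bookkeeping: once the increasing sequence $\r$ is extracted from the definition of compatibility, everything reduces to checking the two membership conditions and quoting Theorem~\ref{t-rank-set}.
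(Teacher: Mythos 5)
Your proof is correct and follows the route the paper intends: the paper omits the argument entirely (saying only that it is ``similar to that of Theorem~\ref{t-rank-set}''), and your two ingredients --- reducing sufficiency to Theorem~\ref{t-rank-set} via the two membership conditions, and identifying the edges of $G(\sigma(D),\rho(D))$ with the steps of the closed rank walk so that coprimality forces a single cycle of length $m+n$ --- are exactly the right way to fill in the details. One minor caveat, which is an imprecision in the paper's definition rather than a gap in your argument: with the orientations as literally written, $G$ is not a directed cycle (the position of rank $0$, being both $S_1$ and $E_1$, has out-degree $2$), so ``cycle'' must be read for the underlying undirected graph, which is what your rank-walk identification establishes.
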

The prove is similar to that of Theorem \ref{t-rank-set}.

Though either $\r$ or $\rho$ are sufficient to invert the $\Phi$ map, a better strategy is combining Theorems \ref{t-rank-set} and \ref{t-EN-sequence}. This is better illustrated by an example. 
\begin{exa}\label{exa-two}
Let $(m,n)=(11,5)$ and $\sigma$ give as follows. Assume Lemma \ref{l-position-kn} below gives the underlined ranks.
$$  \left[\begin{array}{c}\sigma \\ \r \end{array}\right]= \left[ \begin {array}{cccccccccccccccc} S_{{1}}&S_{{2}}&W_{{1}}&S_{{3}}&S_{{4}}&W_{{2}}
&S_{{5}}&W_{{3}}&W_{{4}}&W_{{5}}&W_{{6}}&W_{{7}}&W_{{8}}&W_{{9}}& W_{10} & W_{11}
\\ \underline{0}&3 &\underline{5} & \mathbf{6}& \mathbf{7}& \mathbf{8} & \mathbf{9}&\underline{10} &11 &12 & 13& 14&\underline{15} &17 &18 &20
\end {array}
 \right],
$$
Since there are 4 ranks between $5$ and $10$ and $\r$ is increasing, the boldfaced ranks can be filled in. Similar reasons for the ranks between $10$ and $15$.
The rank $\r(W_2)=8$ implies $\r(E_2)=3$, which has to be $\r(S_2)$. Finally $\r(N_3)=\r(S_3)+11=17$, and similarly $\r(N_4)=18$ and $\r(N_5)=20$ must appear as the final three ranks.
This rank sequence appears in Example \ref{exa-D-rD}.
\end{exa}

In general we can deduce some partial information of the EN-sequence and extract some ranks. These information are crucial for the ReciPhi algorithm.
\begin{lem}\label{l-position-kn}
Let $m=kn+d$ be coprime to $n$ with $1\le d \le n-1$ and $k\ge 1$. Given an SW-sequence $\sigma\in \cal D_{m,n}$, if there is a rank set $R$ with $\r=\sort(R)$ compatible with $\sigma$, then $0, n, \dots, kn+n$ are in $R$ and $\r_{f_i}=in$ for $0\le i\le k+1$, where $f_i$ is determined by:
$$f_0=1;   \quad  \ f_{i}=\sigma^{-1}(W_{f_{i-1}}), 1 \le i \le k+1.$$
\end{lem}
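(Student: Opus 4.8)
The plan is to first upgrade the hypothesis into genuine Dyck-path structure, and then run a short induction on $i$ that simultaneously proves $in\in R$ and pins it at position $f_i$. Since $R$ is compatible with $\sigma$, Theorem~\ref{t-rank-set} gives $R=R(D)$ for a genuine $(m,n)$-Dyck path $D$, so I may freely use the decompositions $R=S(D)\uplus W(D)=E(D)\uplus N(D)$ and the shift relations \eqref{e-SN} and \eqref{e-WE}. The first fact I would record is that $0=\min R$ is a south end (it cannot be a west end, since $0-n<0\notin R$), whence $\min S(D)=0$ and, by \eqref{e-SN}, the smallest north end is $\min N(D)=\min S(D)+m=m=kn+d$. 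Because $d\ge 1$, every rank of $R$ that is $\le kn$ is strictly less than $m$, hence is not a north end, and so is an east end. This single observation is what makes the recursion work.

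Next I would prove by induction on $i$, for $0\le i\le k+1$, the statement $(P_i)$: $in\in R$ and $\r_{f_i}=in$. The base case $i=0$ is immediate, since $f_0=1$ and $\r_1=0$. For the inductive step, assume $(P_i)$ with $0\le i\le k$. By $(P_i)$ the rank $in$ sits at overall position $f_i$, so the $f_i$ smallest elements of $R$ are $\r_1,\dots,\r_{f_i}$, all of which are $\le in\le kn$ and therefore east ends by the observation above. Consequently $in=\r_{f_i}$ is precisely the $f_i$-th smallest east end, i.e.\ $\r(E_{f_i})=in$. Now \eqref{e-WE} gives $W(D)=E(D)+n$, and since this shift is order-preserving it matches the $j$-th west end with the $j$-th east end, so $\r(W_{f_i})=\r(E_{f_i})+n=(i+1)n$. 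By definition $f_{i+1}=\sigma^{-1}(W_{f_i})$ is the position of $W_{f_i}$, whence $\r_{f_{i+1}}=\r(W_{f_i})=(i+1)n$ and in particular $(i+1)n\in R$. This is exactly $(P_{i+1})$, which completes the induction and the lemma.

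The step I expect to require the most care is the identification $\r(E_{f_i})=in$, that is, converting the \emph{overall} rank position $f_i$ of $in$ into its \emph{east-end} index $f_i$. This is where the hypotheses are genuinely used: the bound $d\ge 1$ forces the smallest north end $m=kn+d$ to exceed $kn$, so the initial block of ranks up through $kn$ contains no north ends and the two notions of ``$f_i$-th'' coincide throughout the relevant range $0\le i\le k$. I would also check the small bookkeeping that $W_{f_i}$ is a legitimate label, i.e.\ $1\le f_i\le m$, which is automatic since $f_i$ counts east ends and $|E(D)|=m$. Everything else is routine.
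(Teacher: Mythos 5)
Your proof is correct and follows essentially the same route as the paper's: both rest on the observation that every rank below $m=\min N(D)$ is an east end, so the overall position of $in$ coincides with its east-end index, and then iterate the order-preserving shift $W(D)=E(D)+n$. You spell out the identification $\r(E_{f_i})=in$ and the use of Theorem~\ref{t-rank-set} more explicitly than the paper does, but the argument is the same.
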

\begin{proof}
Suppose the SW-sequence of $R$ is $\sigma=\sigma(R)$.
Since $\r_1=0=\min R$, we have $\r(S_1)=\r(E_1)=0$. It follows that $\r(N_1)=\r(S_1)+m=m$ and $\r(W_1)=\r(E_1)+n=n$. The position of $W_1$ can be read from $\sigma$ but the position of $N_1$ is unknown.

Observe that all the ranks smaller than $\r(N_1)=m$ must belong to $E(R)$. Assume inductively that for $i<k+1$,  $W_{f_{i-1}}$ has rank $(i-1)n<m$. Then it is also $E_{f_i}$ and thus
$\r(W_{f_i}) =\r(E_{f_i})+n=in,$ as desired.
\end{proof}

\begin{exa} Let $(m,n)=(13,4)$. We illustrate Lemma \ref{l-position-kn} by the following example.
$$ \left[ \begin {array}{ccccccccccccccccc} 1&2&3&4&5&6&7&8&9&10&11&12&
13&14&15&16&17\\ \noalign{\medskip}S_{{1}}&S_{{2}}&W_{{1}}&W_{{2}}&S_{
{3}}&W_{{3}}&S_{{4}}&W_{{4}}&W_{{5}}&W_{{6}}&W_{{7}}&W_{{8}}&W_{{9}}&W
_{{10}}&W_{{11}}&W_{{12}}&W_{{13}}\\ 0& &4& & &8& &
 & &12& & & &16& & &\\ E_{{1}}&E_{{2}}&E_{{
3}}&E_{{4}}&E_{{5}}&E_{{6}}&E_{{7}}&E_{{8}}&E_{{9}}&E_{{10}}& &
& & & & & \end {array} \right].
 $$
From $\r_1=0=\r(E_1)$ we deduce that $\r_3=\r(W_1)=\r(E_1)+n=4<m=13$. Then we can label up to $E_3$ and have $\r(E_3)=4$.
It follows that $\r_6=\r(W_3)=\r(E_3)+4=8<13$. Thus we can label up to $E_6$ and have $\r(E_6)=8$. In a similar manner,
we have $\r_{10}=\r(W_6)=\r(E_6)+4=12<13$, which must be $\r(E_{10})=12$; and $\r_{14}=\r(W_{10})=\r(E_{10})+4=16>13$.
\end{exa}

\begin{rem}\label{rem-kn-pm}
Observe that $f_{i+1}-f_i$ is the number of $S$'s up to position $f_i$, so we need not write down the EN-sequence and the position sequence. In the running example,
we start with $\r_1=0$, $\r(W_1)=4$, the next is $\r(W_{1+2})=\r(W_3)=8$, since there are two $S$ before $W_1$; the next one is $W_{3+3}=W_6$ with rank $12$; the final one is
$W_{6+4}=W_{10}$ with rank $16$.
\end{rem}

In this example, we can actually determine the position of $m+n=17$:
$|R \cap \{0,1,\dots, m+n\}|=14$, since $\r_{14}=16$ and $\r$ is increasing. Indeed we have the following result, which is sufficient for inverting $\Phi$ in the next section.
\begin{theo}\label{t-kn-pm}
Follow notations in Lemma \ref{l-position-kn}. If $m=kn+1$, then $\r_{f_{k}+1}=m$ and $|R\cap   \{0,1,\dots, m+n\}|=f_{k+1}$;
If $m=kn+n-1$, where $k$ is allowed to be $0$, then $\r_{f_{k+1}-1}=m$, and
\begin{align}
  \label{e-kn-1}
  |R\cap   \{0,1,\dots, m+n\}|=\sigma^{-1}(W_{f_{k+1}-1})-1.
\end{align}
\end{theo}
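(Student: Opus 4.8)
The plan is to treat the two Fuss families separately, in each case combining Lemma~\ref{l-position-kn} with one elementary global fact: the integer $m+n$ is never the rank of a lattice point of the $m\times n$ rectangle, so $m+n\notin R$ for every $R\in\mathfrak{R}_{m,n}$. Indeed $mb-na=m+n$ forces $m(b-1)=n(a+1)$, whence by coprimality $b-1=tn$ and $a+1=tm$ for some integer $t$, and no such $t$ meets $0\le a\le m$, $0\le b\le n$. I will repeatedly use two facts recorded in the proof of Lemma~\ref{l-position-kn}: that $m=\r(N_1)\in R$ is the smallest north end (from $\r(N_1)=\r(S_1)+m=m$), that $m$ is itself a north end (because $m-m=0\in R$), and consequently that every rank $<m$ is an east end while $m$ is not.

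In the case $m=kn+1$ the lemma gives $\r_{f_{k+1}}=(k+1)n=m+n-1$ and $\r_{f_k}=kn=m-1$. Since $m\in R$ and $m-1,m$ are consecutive integers, $m$ is the smallest rank exceeding $\r_{f_k}$, so $\r_{f_k+1}=m$. For the cardinality, $\r_{f_{k+1}}=m+n-1$ is the largest integer below $m+n$, and $m+n\notin R$; hence the largest rank $\le m+n$ is exactly $\r_{f_{k+1}}$, giving $|R\cap\{0,\dots,m+n\}|=f_{k+1}$.

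In the case $m=kn+n-1=(k+1)n-1$ the lemma instead gives $\r_{f_{k+1}}=(k+1)n=m+1$, and since $m\in R$ with $m,m+1$ consecutive, the rank just below $m+1$ is $m$, i.e. $\r_{f_{k+1}-1}=m$. To count $R\cap\{0,\dots,m+n\}$ I first count the west ends that are $\le m+n$. Using $W(D)=E(D)+n$ with the order-preserving correspondence $\r(W_j)=\r(E_j)+n$, a west end satisfies $\r(W_j)\le m+n$ iff its partner satisfies $\r(E_j)\le m$; as $m$ is a north end and all ranks $<m$ are east ends, the east ends $\le m$ are exactly the $f_{k+1}-2$ ranks $\r_1,\dots,\r_{f_{k+1}-2}$. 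Thus the west ends $\le m+n$ are precisely $W_1,\dots,W_{f_{k+1}-2}$, so $\r(W_{f_{k+1}-1})>m+n$. Granting that $\r(W_{f_{k+1}-1})$ is the \emph{smallest} rank exceeding $m+n$, the ranks $\le m+n$ occupy exactly the positions $1,\dots,\sigma^{-1}(W_{f_{k+1}-1})-1$, which is \eqref{e-kn-1}.

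The one substantial point, and the main obstacle, is to prove that no rank lies strictly between $m+n$ and $\r(W_{f_{k+1}-1})$; equivalently, that the smallest rank exceeding $m+n$ is a west end rather than a south end. When $1\notin R$ this is immediate: then $m+1=(k+1)n$ is an east end, so $\r(W_{f_{k+1}-1})=m+1+n$ and the open interval $(m+n,m+n+1)$ contains no integer. The delicate case is $1\in R$, where $m+1$ is a north end and the gap can contain several integers. Here I would first reduce to excluding south ends: writing $e^\ast=\r(W_{f_{k+1}-1})-n$ for the smallest east end exceeding $m$, every rank in $(m,e^\ast)$ is a north end, and condition~(5) of Proposition~\ref{p-Rank-Set} shows that a north end $r$ always has $r+n\notin R$; hence any rank $r''\in(m+n,e^\ast+n)\cap R$ must have $r''-n\notin R$, i.e.\ $r''$ must be a south end. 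To kill the remaining possibility I would pass to the staircase picture of $D$: at each height $b$ the nodes form one horizontal run, so the ranks of $R$ at height $b$ are a contiguous progression of common difference $n$ whose minimum is the unique south end of that height. If the smallest rank $r^\ast>m+n$ were a south end it would be the minimal rank of its height, and the lattice points carrying the values in $(m+n,r^\ast)$ then form an up–right staircase that the monotone path $D$ is forced to cross, producing a rank of $R$ strictly inside $(m+n,r^\ast)$ and contradicting minimality. This crossing step is exactly where the real work lies.
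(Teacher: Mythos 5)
Your treatment of the case $m=kn+1$, of the identity $\r_{f_{k+1}-1}=m$, and your reduction showing that any element of $R$ strictly between $m+n$ and $\r(W_{f_{k+1}-1})$ would have to be a south end, are all correct (and the observation $m+n\notin R$ is proved cleanly). But the proof is not complete: the one step you defer --- excluding such a south end --- is precisely the entire content of the hard half of the theorem, and the geometric argument you sketch for it does not work as stated. The set of lattice points whose ranks lie in the interval $(m+n,r^\ast)$ is a thin diagonal band; since consecutive nodes of $D$ change rank by $+m$ or $-n$, a monotone lattice path can step over such a band without landing on any of its points, so $D$ is not ``forced to cross'' it, and nothing in the sketch rules this out. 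It is also a warning sign that your crossing argument makes no use of the hypothesis $m=(k+1)n-1$ at this stage, whereas the conclusion fails for general $m$.

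The missing ingredient, which is how the paper closes the argument, is an induction showing that the ranks of $R$ immediately above $m$ form a block of \emph{consecutive integers} $m,m+1,\dots,m+s$, the first $s$ of them being the north ends $N_1,\dots,N_s$ and the last one the east end $E_{f_{k+1}-1}$ (so $e^\ast=m+s$ in your notation). The induction uses the Fuss condition essentially: if $m+j$ is a north end then $j=\r(S_{j+1})\in R$, and the mechanism of Lemma \ref{l-position-kn} then forces $j,\,n+j,\dots,(k+1)n+j=m+j+1$ all into $R$, so the block grows by one for as long as north ends keep appearing. Once one knows that \emph{every} integer of $(m,e^\ast)$ lies in $R$ and is a north end, Proposition \ref{p-Rank-Set}(5) excludes every integer of $(m+n,e^\ast+n)$ at once --- there is no residual south-end case left to handle, so the delicate part of your reduction becomes unnecessary. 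You should replace the staircase-crossing sketch with this consecutive-block induction.
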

\begin{proof}
By Lemma \ref{l-position-kn} and the fact that $\r$ is increasing, the case $m=kn+1$ is obvious and the first part of the case $m=kn+n-1$ is also obvious. Now we show the second equality when $m=kn+n-1$.

We have $\sigma^{-1}(N_1)=f_{k+1}-1$. There will be no difficulty if $W_{f_{k}}$ with rank $m+1$ is also $E_{f_{k+1}-1}$, since then $\r(W_{f_{k+1}-1})=\r(E_{f_{k+1}-1})+n=m+n+1$, which forces $|R\cap   \{0,1,\dots, m+n\}|= \sigma^{-1}(W_{f_{k+1}-1})-1,$ as desired. The situation is similar if there are several $N$ labels before $E_{f_{k+1}-1}$. Note that
\eqref{e-kn-1} is independent of the number of these $N$ labels.

Claim: If $f_{k+1}-1, \dots, f_{k+1}+s-2$ are positions of $N_1,N_2,\dots, N_s$ in $\rho(R)$ for some $s< n$, then
$\r_{f_{k+1}+j-1}=m+j$ for $j=0,1,\dots, s$.

We prove the claim by induction on $s$. The case $s=1$ is trivial. Assume the claim holds for $s-1$. Then
$\r(N_{j+1})=\r_{f_{k+1}+j-1}=m+j$ for $j=0,1,\dots, s-1$. It follows that $\r(S_{j+1})=\r(N_{j+1})-m=j$. A similar argument as in the proof of Lemma \ref{l-position-kn} shows that $\r_{f_i+j}=\r_{f_i}+j=in+j<m$ for $i\le k+1$ and $j\le s$. In particular $\r_{f_{k+1}+s}=m+s+1$.

Now suppose $N_s$ is followed by $E_{f_{k+1}-1}$. By the claim $\r(E_{f_{k+1}-1})=m+s$, 
we have  $\r(W_{f_{k+1}-1})=\r(E_{f_{k+1}-1})+n=m+n+s$.
Now for $1\le j\le s$,  $m+j-1\in N(R)$ implies that $m+j-1-m=j-1\in S(R)$,
so by Proposition \ref{p-Rank-Set} part (4), $n+m+j-1\not \in R$. Therefore $\r(W_{f_{k+1}-1})$ is the smallest rank larger than $m+n$. Equation \ref{e-kn-1} then follows.
\end{proof}

\section{Left and Right operation of the rank sequence \label{sec:Left}}
For $R_0\ne R\in \mathfrak{R}^+_{m,n}$, we can perform two operations on $R$ by:
\begin{align}
\cal L(R)&= (R\setminus \{0\} \cup \{m+n\})-\min (R \setminus \{0\}); \\
\cal R(R)&= R \setminus \{\max R\} \cup \{ \max R-m-n\};
\end{align}

The following result is an easy exercise.
\begin{lem}
  For $R_0\ne R\in  \mathfrak{R}^+_{m,n}$, $\cal L(R)$ and $\cal R(R)$ are both in $ \mathfrak{R}^+_{m,n}$.
\end{lem}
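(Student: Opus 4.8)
The plan is to verify, for each of $\cal L(R)$ and $\cal R(R)$, the four things that characterize membership in $\mathfrak{R}^+_{m,n}$: the set contains $0$, has exactly $m+n$ elements, consists of nonnegative integers, and satisfies condition (3) of Proposition \ref{p-Rank-Set}. Throughout I would lean on two elementary facts. First, $0\in R$ forces $m\in R$: condition (3) at $\ell=0$ concerns $m$ and $-n$, and $-n<0\notin R$, so $m\in R$. Second, none of $m+n$, $2m+n$, $m+2n$ is the rank of a lattice point of an $(m,n)$-path, since each forces an equation of the shape $(b-1)m=(a+1)n$ (or similar) with no admissible $0\le a\le m$, $0\le b\le n$, exactly as in the second half of the proof of Proposition \ref{p-Rank-Set}; hence these three values lie outside every rank set.

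For $\cal L(R)$, writing $d=\min(R\setminus\{0\})$, I would set $R'=(R\setminus\{0\})\cup\{m+n\}$, so that $\cal L(R)=R'-d$. Since condition (3) is invariant under a global translation, it suffices to check it for $R'$, and then $0\in\cal L(R)$ holds because $d$ is the minimum of $R'$. The verification for $R'$ reduces to the finitely many $\ell$ whose relation $\{\ell+m,\ell-n\}$ meets the modified elements $\{0,m+n\}$; after discarding the values that are never ranks, only $\ell=n$ (if it occurs) and the new element $\ell=m+n$ need attention, and both satisfy the ``exactly one'' requirement using $m\in R'$ and $0\notin R'$. The cardinality is correct because $m+n\notin R$, and nonnegativity holds since $r-d\ge 0$ for $r\in R\setminus\{0\}$ while $m+n-d>0$ follows from $d\le m$ (as $m\in R\setminus\{0\}$).

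For $\cal R(R)$ I would first pin down $M=\max R$. As $R$ is a set of $m+n$ distinct nonnegative integers containing $0$, we have $M\ge m+n-1$, with equality only for $R=R_0$; since $m+n$ is never a rank, the hypothesis $R\ne R_0$ upgrades this to $M\ge m+n+1$, so the new element $M-m-n\ge 1$ is positive. The crucial input is a monotonicity sub-lemma: if $(a,b)$ is the node of maximal rank $M$, then the weakly monotone path visits no node with strictly larger $x$-coordinate and strictly smaller $y$-coordinate (such a node would have to precede $(a,b)$ yet have $x>a$, or follow it yet have $y<b$). Applying this to $(a+1,b-1)$, $(a+2,b-1)$, $(a+1,b-2)$ shows that $M-m-n$, $M-m-2n$, $M-2m-n\notin R$. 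The first guarantees the correct cardinality of $\cal R(R)=(R\setminus\{M\})\cup\{M-m-n\}$, while the other two annihilate the potentially dangerous positions in the condition-(3) check. That check again localizes at the modified elements: at the new element $M-m-n$ its relation meets $M-n\in R$ (forced by condition (3) at $M$) and the excluded $M-m-2n$, and at $M-m\in R$ (forced by condition (5) at $M$) the relation swaps the removed $M$ for the added $M-m-n$, each time leaving exactly one neighbor present. With $0\in\cal R(R)$ automatic, this yields $\cal R(R)\in\mathfrak{R}^+_{m,n}$.

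I expect the bookkeeping for $\cal R$ to be the main obstacle, concentrated in two spots: showing $M-m-n\notin R$ so that $\cal R(R)$ genuinely has $m+n$ elements, and ruling out a second neighbor at the newly inserted element in the condition-(3) check. Both are dispatched cleanly by the monotonicity sub-lemma, which is the single geometric ingredient I would isolate; everything else is translation invariance together with a finite case analysis on the handful of $\ell$ adjacent to the modified elements. As an alternative to the direct computation for $\cal R$, one could instead derive it from $\cal L$ through the rank-complement symmetry $\rc(R)=(\max R)-R$, but the direct verification above seems shorter.
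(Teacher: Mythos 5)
The paper offers no proof of this lemma (it is dismissed as ``an easy exercise''), so there is nothing to compare against line by line; your job is to fill the gap, and your argument does so correctly. Your checklist (contains $0$, cardinality $m+n$, nonnegativity, condition (3) of Proposition \ref{p-Rank-Set}) is exactly what membership in $\mathfrak{R}^+_{m,n}$ requires, your two preliminary facts are right, and the finite case analyses at the modified elements are complete: for $\mathcal{L}$ the only elements needing attention are indeed $\ell=n$ and $\ell=m+n$, and for $\mathcal{R}$ the dangerous values are precisely $M-m-n$, $M-2m-n$, $M-m-2n$ together with the forced memberships $M-m, M-n\in R$. Two small remarks. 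First, when you apply the monotonicity sub-lemma to $(a+2,b-1)$ and $(a+1,b-2)$, these points may fall outside the $m\times n$ rectangle (e.g.\ when $a=m-1$ or $b=1$); this is harmless, since then no rectangle point at all carries that rank (the other candidates differ by $\pm(m,n)$ and also leave the rectangle), but it deserves a sentence. Second, the geometric sub-lemma can be partly bypassed: $M-m-n\notin R$ already follows from condition (3) applied at $M-m\in R$, since $M$ itself lies in $R$ and the condition demands \emph{exactly} one of $M$ and $M-m-n$; the remaining two exclusions, however, do seem to want either your geometric observation or the rank-complement symmetry you mention, so isolating the sub-lemma is a reasonable choice.
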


The geometric meaning of the Dyck path $\cal R(R)$ is obtained by changing the highest rank point $(a,b)$ of $D$ to $(a+1,b-1)$. This corresponds to removing a square
from the corresponding Dyck path $D$, so 
$$\area (D)= \area (\cal R(D))+1.$$ 
Repeatedly applying the $\cal R$ operation to $R$ will finally give $R_0$. So we can obtain by induction that
$$ \area (D)= \frac{1}{m+n}\left( \sum_{i=1}^{m+n} \r_i -\binom{m+n}{2}  \right).$$

The geometric meaning of $\cal L(D)$ is similar to $\cal R(D)$, but work on the rank complement of $D$. They are related by
$\cal L(D) =  \rc \circ \cal R \circ \rc ( D).$ It follows that  
$$  \area (\rc( \cal L( D ))) =\area (\rc\circ  \cal L \circ \rc \circ \rc( D )= \area(\cal R( \rc(D)))= \area(\rc(D)) -1.
$$
Iterating this formula gives
\begin{align}
  \label{e-L-area}
\area( \rc\circ \cal L^k (D))= \area(\rc(D))-k  .
\end{align}

\def\key{\texttt{key}}

The position $\delta=\delta(D)=|R(D)\cap \{0,1,\dots,m+n\}|$ was used in \cite{zeta-map}. We also use $\delta$ but we find it better to use the key position 
defined by $\key (D)=\key(R)=|S(D) \cap \{ 0,\dots, m+n \}|$ by observing that $\delta$ is between the $\key (D)$-th $S$ and the $\key(D)+1$-st $S$.  We have the following result.
\begin{lem}\label{l-rec-left}
Suppose $\sigma$ is the SW-sequence of $R\in \mathfrak{R}_{m,n}$ and $\sigma^L$ is the SW-sequence of $\cal L(R)$.
Let $s=\sigma^{-1}(W_1)$ and $\delta=\delta(R)$. Then
\begin{enumerate}
  \item[i)] $R$ can be recovered from the pair $(\cal L(R),\sigma)$ by $R= (\r_2+\cal L(R))\setminus \{m+n\} \cup \{0\}$, where
  $n-\r_2$ is equal to the $s-1$-st smallest element of $\cal L(R)$.

\item[ii)] $\sigma^L$ is obtained from $\sigma$ by inserting a $W$ after $S_{\key(R)}$, same as after $\sigma_\delta$,  and then removing $\sigma_s=W$.

\item[iii)] $\sigma$ is obtained from $\sigma^L$ by removing $\sigma^L_\delta=W$ and and then inserting a $W$ after $\sigma_{s-1}$.
\end{enumerate}
\end{lem}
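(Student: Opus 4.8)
The plan is to route everything through the intermediate set $\tilde R=(R\setminus\{0\})\cup\{m+n\}$, so that $\cal L(R)=\tilde R-\r_2$. Throughout I take $R\in\mathfrak{R}^+_{m,n}$ with $R\neq R_0$ (so $\cal L$ is defined), and write $\r=\sort(R)$, $\r_1=0$, $\r_2=\min(R\setminus\{0\})$, and $s=\sigma^{-1}(W_1)$. First I would record the facts I will lean on: exactly as in the proof of Lemma~\ref{l-position-kn}, $\r_s=\r(W_1)=\r(E_1)+n=n$; no lattice point of the rectangle has rank $m+n$ or $m+2n$ (the coprimality argument of Proposition~\ref{p-Rank-Set}); and, since $R\subseteq\N$, every rank $\ell$ with $0<\ell<n$ has $\ell-n<0\notin R$ and is hence a south end, so that $\sigma_2=\cdots=\sigma_{s-1}=S$.

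The key reduction is that the $S/W$-label of a rank $\ell$ of a set $A$---south if $\ell+m\in A$, west if $\ell-n\in A$---is invariant under translating $A$ by a constant; hence $\sigma^L=\sigma(\cal L(R))=\sigma(\tilde R)$, and it suffices to compare $\sigma(\tilde R)$ with $\sigma(R)$. As $\tilde R$ is obtained from $R$ by deleting $0$ and inserting $m+n$, membership changes only at the values $0$ and $m+n$, so the label of a common rank $\ell$ can change only when $\ell+m$ or $\ell-n$ equals $0$ or $m+n$. Since $\ell\ge0$ excludes $\ell+m=0$ and $m+2n$ is not a rank, the sole affected rank is $\ell=n$: it is a west end of $R$ (because $0\in R$) but a south end of $\tilde R$ (because $m+n\in\tilde R$ while $0\notin\tilde R$). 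In addition, the deleted rank $0$ is a south end of $R$, and the inserted rank $m+n$ is a west end of $\tilde R$ (as $(m+n)-n=m\in\tilde R$).

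To assemble (ii) I would locate $m+n$ in $\sort(\tilde R)$: exactly $\delta-1$ ranks of $R\setminus\{0\}$ lie below $m+n$, so $m+n$ sits at position $\delta$ and $\sigma^L_\delta=W$. Reading off the labels of $\sort(\tilde R)=(\r_2,\dots,\r_\delta,m+n,\r_{\delta+1},\dots,\r_{m+n})$ gives $\sigma^L=(\sigma_2,\dots,\sigma_{s-1},S,\sigma_{s+1},\dots,\sigma_\delta,W,\sigma_{\delta+1},\dots,\sigma_{m+n})$, where the marked $S$ is the flipped rank $n$ and the marked $W$ is rank $m+n$. Because $\sigma_1=\cdots=\sigma_{s-1}=S$, the prefix $(\sigma_2,\dots,\sigma_{s-1},S)$ is just $(\sigma_1,\dots,\sigma_{s-1})$, i.e.\ $\sigma$ with $\sigma_s$ deleted; comparing termwise with $\sigma$ this says precisely: insert a $W$ after $\sigma_\delta$ and delete $\sigma_s=W$. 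Since the positions from the $\key(R)$-th $S$ up to $\delta$ carry only $W$'s, inserting a $W$ after $\sigma_\delta$ coincides with inserting one after $S_{\key(R)}$, which is (ii). Statement (iii) is the inverse recipe read off the same identity: delete the rank-$(m+n)$ letter $\sigma^L_\delta=W$ and reinsert a $W$ at the end of the initial $S$-block, i.e.\ after position $s-1$.

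Finally, for (i), unwinding definitions gives $\r_2+\cal L(R)=\tilde R$, hence $(\r_2+\cal L(R))\setminus\{m+n\}\cup\{0\}=R$, so only $\r_2$ must be recovered from $(\cal L(R),\sigma)$. In $\cal L(R)=\{\r_2,\dots,\r_{m+n},m+n\}-\r_2$ the $s-1$ smallest elements are $\r_2-\r_2<\cdots<\r_s-\r_2$, since every remaining element ($\r_j-\r_2$ for $j>s$, and $m+n-\r_2$) exceeds $\r_s-\r_2=n-\r_2$; thus the $(s-1)$-st smallest element of $\cal L(R)$ equals $n-\r_2$, and $s=\sigma^{-1}(W_1)$ is read from $\sigma$, which determines $\r_2$ as claimed. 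I expect the genuine difficulty to be the positional bookkeeping behind (ii)--(iii): one must simultaneously account for deleting the rank-$0$ letter, flipping rank $n$ from $W$ to $S$, and inserting the rank-$(m+n)$ letter, and show these three edits collapse into the stated two-step operation. The structural fact that makes this clean---and the point I would highlight---is that all ranks in $(0,n)$ are forced to be south ends, so the three edits all occur within, or next to, a single homogeneous block of $S$'s.
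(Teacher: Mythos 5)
Your proof is correct and follows essentially the same route as the paper's: both write out $\sort(\cal L(R))$ as the shift of $(R\setminus\{0\})\cup\{m+n\}$ by $\r_2$, recover $\r_2$ from the $(s-1)$-st smallest element $n-\r_2$, and track how the $S/W$ labels move under this shift. Your observation that labels can only change where membership changes (hence only at the rank $n$, plus the deleted $0$ and inserted $m+n$) is a cleaner packaging of the paper's explicit case analysis a)--d), and your point that the three edits collapse into the stated insert-then-delete because $\sigma_1=\cdots=\sigma_{s-1}=S$ is exactly the paper's closing remark.
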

\begin{proof}
Let $\r =\sort(R)$, $\cal L( \r)= \sort (\cal L(R))$. By assumption, $\sigma_s=W_1$ and $\r_\delta<m+n<\r_{\delta +1}$, then
$$\r^L=\cal L( \r)= (0=\r_2-\r_2,\r_3-\r_2,\dots, \r_{\delta}-\r_2, m+n-\r_2, \r_{\delta +1}-\r_2,\dots, \r_{m+n}-\r_2).$$
Clearly, we have $\r^L_{s-1}=\r_s-\r_2=n-\r_2$.

i) $\r$ can be reconstructed from the triple $(\r^L,s,t)$ by the formula:
\begin{align}
  \label{e-r-by-rL}
\r = (0, \r^L_1+\r_2,\dots, \r^L_{\delta -1}+\r_2, \r^L_{\delta +1}+\r_2,\dots, \r^L_{m+n}+\r_2),   \text{ where } \r_2=n-\r^L_{s-1}.
\end{align}
But with $(\r^L, \sigma)$, we have: $s=\sigma^{-1}(W)$ and $\delta$ is the position of $\r^L_{s-1}+m$ in $\r^L$.

ii) We discuss by cases: a) If $i>1$ and $\r_i \in S(R)$ then $\r_i+m=\r_j$ for some $j>i$. It follows that $\r_i-\r_2+m=\r_j-\r_2$, so that $\r_i-\r_2\in S(\cal L(R))$;
b) If $j\ne s$ and $\r_j \in W(R)$ then $\r_j-n=\r_i$ for some $1\ne i<j$. It follows that $\r_j-\r_2-n=\r_i-\r_2$, so that $\r_j-\r_2\in W(\cal L(R))$;
c) $\r_s-\r_2 \in S(\cal L(R))$ since $\r_s-\r_2+m=m+n-\r_2\in r(\cal L(R))$;
d) $m+n -\r_2 \in W(\cal L(R))$ since $m+n-r_2-n=m-r_2 \in r(\cal L(R))$.

Therefore, the SW-sequence $\sigma^L$ is obtained from $\sigma$ by inserting a $W$ after $\sigma_\delta$, changing $\sigma_s=W$ to $S$, and finally removing $\sigma_1$. This is equivalent to the description in the lemma, since $\sigma_i=S$ for $i=1,2,\dots, s-1$.

iii) Follows directly from part ii).
\end{proof}

\begin{cor}\label{c-Phi-invert}
If for every $D\in \cal D_{m,n}$, the number $\key(D)$ can be uniquely determined by $\sigma(D)$, then the $\Phi$ map is invertible.
\end{cor}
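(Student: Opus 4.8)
\emph{Plan.} The plan is to turn the hypothesis into an inductive reconstruction of the rank set $R=R(D)$ from its SW-sequence $\sigma=\sigma(D)$. Since $\Phi(D)=\sigma(D)|_{S=u,W=d}$, recovering $R(D)$ from $\sigma(D)$ is literally the computation of $\Phi^{-1}$; and as $\Phi$ maps the finite set $\cal D_{m,n}$ to itself, injectivity (that is, $D$ being a function of $\sigma(D)$) already forces bijectivity. So it suffices to exhibit, under the stated hypothesis, a procedure that outputs $R$ from $\sigma$.

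I would run the recursion along the $\cal L$ operation, inducting on $k=\area(\rc(R))$. From $\cal L=\rc\circ\cal R\circ\rc$ and $\rc(R_0)=R_0$ we get $\cal L^{k}(R)=R_0$, and by \eqref{e-L-area} each application of $\cal L$ drops $\area(\rc(\cdot))$ by exactly one; hence the induction is well founded with base case $R=R_0$. For the base case I would invoke Example \ref{exa-R0}: the sequence $\sigma(R_0)=S^nW^m$ is immediately recognizable and uniquely yields $R_0$.

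For the inductive step ($R\ne R_0$) I would proceed in three moves. First, read $s=\sigma^{-1}(W_1)$ straight off $\sigma$, and use the hypothesis to extract $\key(R)$ from $\sigma$. Second, feed $\key(R)$ and $s$ into part ii) of Lemma \ref{l-rec-left} to build the SW-sequence $\sigma^{L}=\sigma(\cal L(R))$ explicitly: insert a $W$ just after the $\key(R)$-th $S$ of $\sigma$, then delete the first $W$. Since $\area(\rc(\cal L(R)))=k-1$, the induction hypothesis applies to $\cal L(R)$ and reconstructs its rank set from $\sigma^L$. Third, with both $\cal L(R)$ and $\sigma$ now known, part i) of Lemma \ref{l-rec-left} returns $R=(\r_2+\cal L(R))\setminus\{m+n\}\cup\{0\}$, where $\r_2=n-\r^{L}_{s-1}$. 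This produces $R$ and closes the induction.

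The crux, and essentially the only nontrivial point, is that $\key(R)$ is exactly the datum needed to apply part ii) of Lemma \ref{l-rec-left}. Everything else in the step ($s$, the positions of the $S$'s, and the formula of part i)) is directly readable from $\sigma$ or from the already-recovered $\cal L(R)$; the one thing \emph{not} determined by $\sigma$ alone is the position $\delta$ at which the new $W$ must be inserted when passing from $\sigma$ to $\sigma^L$, and this is precisely pinned down by inserting after the $\key(R)$-th $S$. So I would emphasize that the hypothesis feeds into this single step. The remaining care is bookkeeping: checking that the induction hypothesis legitimately applies to $\cal L(R)$, which it does, since the corollary's hypothesis is universal over $\cal D_{m,n}$ (so $\key(\cal L(R))$ is determined by $\sigma^L$) and the measure $\area(\rc(\cdot))$ has strictly decreased.
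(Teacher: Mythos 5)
Your proposal is correct and follows essentially the same route as the paper: induction on $\area(\rc(D))$ with base case $R_0$ recognized from $\sigma=S^nW^m$, using the hypothesis to obtain $\key(D)$, Lemma \ref{l-rec-left} ii) to pass to $\sigma^L$, the inductive hypothesis to recover $\r^L$, and Lemma \ref{l-rec-left} i) to reconstruct $R$, with bijectivity following from injectivity on the finite set $\cal D_{m,n}$. Your write-up merely spells out the same steps in more detail than the paper does.
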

\begin{proof}
We prove the injectivity of $\Phi$ by induction on $\area(\rc(D))$. The bijectivity then follows since $\Phi$ is a map from $\cal D_{m,n}$ to itself.

The base case is $\sigma(R_0)=S^n W^m$, as discussed in Example \ref{exa-R0}. 
If there is a $D$ such that $\sigma(D)=\sigma$, then $\area (\rc(\cal L(D)))=\area (\rc (D))-1 $. By induction $\r^L$ can be constructed from $\sigma^L$, which can be constructed 
from $\sigma$ and $\key(D)$ by 
Lemma \ref{l-rec-left}. Then $R$ can be constructed from $(\r^L,\sigma)$, again by Lemma \ref{l-rec-left}.
\end{proof}

From the above proof, we see a possible approach for inverting the sweep map by searching all possible values of $\delta$. The correct $\delta$ will gives rise the desired rank set $R$. Because we need to guess in each of the $\area( \rc(D))$ recursive steps, we are indeed searching (roughly) through an $(m+n)$-ary tree of depth $\area( \rc(D))$. Using $\key(D)$ is better, since there are only $n$ possible value of $\key(D)$, corresponding to the $n$ $S$'s. Searching through an $n$-ary tree is still not a good strategy. Using Lemma \ref{l-position-kn} may sharply narrow down the searchs.

\section{The ReciPhi Algorithm for inverting the sweep map \label{sec:ReciPhi}}

We start by combining Corollary \ref{c-Phi-invert} and Theorem \ref{t-kn-pm} to recover Theorem \ref{t-fuss} for the Fuss case $m=km\pm 1$.
Indeed, we have a simple recursive algorithm as follows.

The FussiPhi Algorithm.

Input: A coprime pair $(m,n)$ with $m=kn\pm 1, \ k\ge 1$; An SW-sequence $\sigma\in \cal D_{m,n}$.

Output: A rank sequence $\r$ compatible with $\sigma$.

\begin{enumerate}
  \item If $\sigma=S^n W^m$,  then output $\r=(0,1,\dots, m+n-1)$.

  \item Find by Lemma \ref{l-position-kn} the position $s$ of $n$ and position $\delta$ of $m+n$, i.e., $\r_s=n, \ \r_\delta<m+n<\r_{\delta+1}$.

  \item Let $\sigma^L$ be obtained from $\sigma $ by inserting a $W$ after $\sigma_\delta$ and then removing $\sigma_s=W$.
 Recursively construct $\r^L=\r(\sigma^L)$. Output $\r$ using Lemma \ref{l-rec-left} for $(\r^L,\sigma)$.
\end{enumerate}

\begin{exa}
We illustrate the FussiPhi algorithm by redoing Example \ref{exa-two}. Recall that $(m,n)=(11,5)$.
$$  \left[\begin{array}{c} \sigma \\ \r \end{array}\right]= \left[ \begin {array}{cccccccccccccccc} S_{{1}}&S_{{2}}&W_{{1}}&S_{{3}}&S_{{4}}&W_{{2}}
&S_{{5}}&W_{{3}}&W_{{4}}&W_{{5}}&W_{{6}}&W_{{7}}&W_{{8}}&W_{{9}}& W_{10} & W_{11}
\\ 0& &5 & & & & &10 & & & & &15 & & &
\end {array}
 \right],
$$
where the partial ranks are computed by Remark \ref{rem-kn-pm}. Thus $s=3$, $\delta=\sigma^{-1}(W_8)$, so
$$  \left[\begin{array}{c} \sigma' \\ \r'\end{array} \right]= \left[ \begin {array}{cccccccccccccccc} S_{{1}}&S_{{2}}&S_{{3}}&S_{{4}}&W_{{1}}
&S_{{5}}&W_{{2}}&W_{{3}}&W_{{4}}&W_{{5}}&W_{{6}}&W_{{7}}&W_{{8}}&W_{{9}}& W_{10} & W_{11}
\\ 0& & & &5 & & & & &10 & & & & &15 &
\end {array}
 \right].
$$
Similarly $s'=5$, and $\delta'=\sigma^{-1}(W_{10})$. Thus
$$  \left[\begin{array}{c} \sigma'' \\ \r''\end{array} \right]= \left[ \begin {array}{cccccccccccccccc} S_{{1}}&S_{{2}}&S_{{3}}&S_{{4}}
&S_{{5}}&W_{{1}}&W_{{2}}&W_{{3}}&W_{{4}}&W_{{5}}&W_{{6}}&W_{{7}}&W_{{8}}&W_{{9}}& W_{10} & W_{11}
\\ 0&1 &2 &\underline{3} &4 & 5& 6& 7& 8&9 & 10& 11& 12& 13&\underline{14} &15
\end {array}
 \right].
$$
Now adding all the ranks by $5-3=2$, removing $m+n$ and adding $0$ gives back
$$  \left[\begin{array}{c} \sigma' \\ \r'\end{array} \right]= \left[ \begin {array}{cccccccccccccccc} S_{{1}}&S_{{2}}&S_{{3}}&S_{{4}}&W_{{1}}
&S_{{5}}&W_{{2}}&W_{{3}}&W_{{4}}&W_{{5}}&W_{{6}}&W_{{7}}&W_{{8}}&W_{{9}}& W_{10} & W_{11}
 \\0& \underline{2} &3 &4 &5 &6 & 7& 8& 9& 10&11 & 12& \underline{13}& 14& 15&17
\end {array}
 \right].
$$
Now adding all the ranks by $5-2=3$, removing $m+n$ and adding $0$ gives back
$$  \left[\begin{array}{c} \sigma \\ \r\end{array} \right]= \left[ \begin {array}{cccccccccccccccc} S_{{1}}&S_{{2}}&W_{{1}}&S_{{3}}&S_{{4}}&W_{{2}}
&S_{{5}}&W_{{3}}&W_{{4}}&W_{{5}}&W_{{6}}&W_{{7}}&W_{{8}}&W_{{9}}& W_{10} & W_{11}
\\0& 3& \underline{5} &6 &7 &8 &9 & 10& 11& 12& 13&14 & 15& 17& 18&20
\end {array}
 \right].
$$
\end{exa}

Now we are ready to give:

The ReciPhi Algorithm for $m=kn+d, \ k\ge 1, 1\le d<n$.

Input: A coprime pair $(m,n)$ as above ; An SW-sequence $\sigma\in \cal D_{m,n}$.

Output: Determine the existence of the rank sequence $\r$ compatible with $\sigma$. Output $\r$ if exists.

\begin{enumerate}
  \item If $\sigma=S^n W^m$,   then output $\r=(0,1,\dots, m+n-1)$.

  \item Find by Lemma \ref{l-position-kn} the position $s$ of $n$ and position $a$ of $kn+n$, i.e., $\r_s=n, \ \r_a=kn+n$. Let $C=\{ c :  a< \sigma^{-1}(S_{c+1}) \text{ and } \sigma^{-1}(S_c)<a+d\}$, where we set $\sigma^{-1}(S_{n+1})=m+n+1$.

  \item For each $c\in C$ as a candidate of $\key(R)$, define the interval $I_c$ to be $I_c=[\sigma^{-1}(S_c), \sigma^{-1}(S_{c+1})-1]$. Let $\sigma'$ be obtained from $\sigma $ by inserting a $W$ after $S_c$ and then removing $\sigma_s=W$.
  If $\sigma'\ne \sigma$ then recursively construct $\r'=\r(\sigma')$. Find the position $\delta$ so that $\r'_\delta= \r'_{s-1}+m$. If $\sigma'_\delta=W$ and $\delta \in I_c$ then
  $\sigma'$ is the desired $\sigma^L$. Output $\r$ using Lemma \ref{l-rec-left}.

\item If no $\r$ can be found in Step 3, then there is no $\r $ compatible with $\sigma$.
\end{enumerate}

\begin{proof}[Justification of the ReciPhi Algorithm]
Assume the existence of $\r$. Step 1 is trivial.

Step 2 is to determine the possible position $\delta$ of $m+n$, i.e., $\r_\delta <m+n<\r_{\delta+1}$. Since $\r_a=kn+n$ and $\r$ is a strictly increasing sequence, $\r_{a+d}\ge \r_a +d=kn+n+d=m+n$, where the equality can not hold since $m+n \not\in \r$. It follows that
the $\delta$  must satisfy the condition $a\le \delta \le a+d-1$. That is, $\delta \in I_c$ for some $c\in C$.

In Step 3, for each candidate $c\in C$ of $\key(R)$, we try to construct $\r$ using $\delta \in I_c$. The $\sigma'$ is just the $\sigma^L$ constructed by Lemma \ref{l-rec-left} from $(\sigma,s,\delta)$, which are  same for all $\delta \in I_c$. Lemma \ref{l-rec-left} also applies to give the position of $\delta $ by $\r'_t=\r'_{s-1}+m$. If this $\delta $ matches the condition $\delta \in I_c$, then Lemma \ref{l-rec-left} says that
 the $\r$ constructed from $(\r',\sigma)$ is compatible with $\sigma$.

In Step 4, since all candidates fail, there is no $\r$ compatible with $\sigma$.
\end{proof}

The complexity of the ReciPhi Algorithm:

If $\sigma(D)=\sigma$, then the search tree has depth $\area(\rc( D))$. Each node has at most $d$ children corresponding to the choices of $\delta$. In the worst case, we need to
search a $d$-array tree of depth $\area(\rc( D))$. In practice, the algorithm uses $\key(D)$ instead of $\delta$, which may significantly reduce the number of children. The Maple implementation of this algorithm is pretty fast when $d$ is small.

Examples for illustrating the ReciPhi algorithm is too lengthy to be presented here, so we omit it. 

\section{Concluding remark}
The $\Phi$ map only takes $O(m+n)^2$ time, mostly for sorting. It is hard to believe that inverting it is so hard. 
Can we find a polynomial time algorithm for inverting $\Phi$? At least we have presented a polynomial time algorithm 
for the Fuss cases. 

The author believes that the rank set should play the leading role for inverting the sweep map. The walk on the rank set $R$ for coprime $(m,n)$ is simply
a closed circle of length $m+n$. If $m$ and $n$ are not coprime, then the walk will be a Eulerian tour, as described in \cite{Loehr-Warrington} in the study of many statistics on partitions. The Eulerian tour was used to study the $h$-statistic, which is just the $\dinv$ statistic when restricted to $(m,n)$-Dyck paths.

Finding the EN-sequence $\rho$ is a good strategy for inverting $\Phi$. Since $\rev(\rho)|_{E=d,N=u}\in \cal D_{m,n}$, it is natural to define the $\chi$ map on $\cal D_{m,n}$ by $\chi(\sigma)=\rev(\rho)$. Lemma \ref{l-chi} is equivalent to saying that $\chi = \Phi \circ \rc \circ \Phi^{-1}$ when assuming $\Phi$ is invertible. Since $\rc$ is an involution, so is the $\chi$ map. Indeed $\chi$ is an area preserving involution \cite[Proposition 6.9]{zeta-map}.
The EN-sequence for the $m=n+1$ case is particularly nice: 
Removing the final $W$ in $\sigma$, and removing the first $E$ in $\rho$, one will see the similarity of the resulting sequence. For example:
If $(m,n)=(5,4)$ and $R=(0, 4, 5, 8, 10, 11, 12, 15, 16)$, then the shifted SW- and EN-sequence is given by
$$ \left[ \begin {array}{cccccccccc} & S&W&S&W&S&S&W&W&W
\\ E&E&N&E&N&E&E&N&N &\end {array} \right].$$
This pattern is always true. See \cite[Theorem 7.2]{zeta-map}. It is natural to hope for a similar result for the Fuss case. 
We discovered the $\chi$ map independently, and found a desired solution for the Fuss case. This will appear in an upcoming paper \cite{Garsia-Xin-Fuss}.

The main objective of this paper is to introduce the ReciPhi algorithm, which seems to be the first efficient algorithm inverting the $\Phi$ map. However, the algorithm does not perform well when $d$ is large, so different technique should be developed for large $d$. A similar argument as for $m=kn-1$ might work when $n-d$ is small. 

It is still hard to prove that $\Phi$ is a bijection even when $m=n+2$ with $n$ odd. The major problem is that the recursion requires some property of $\cal L(R)$, which is little known.

\medskip
\noindent
{\small \textbf{Acknowledgements:} This work was done during the author's visiting at UCSD. The author is very grateful to Professor Adriano Garsia for
inspirations and encouraging conversations.
%
This work was partially supported by the National Natural Science Foundation of China (11171231).}

\end{document}